\newtheorem{teor}{Theorem}[section]
\newtheorem{prop}[teor]{Proposition}
\newtheorem{cor}[teor]{Corollary}
\newtheorem{lema}[teor]{Lemma}
\theoremstyle{definition}
\newtheorem{defin}[teor]{Definition}
\newtheorem{prob}{Problem}
\theoremstyle{remark}
\newcommand{\R}{\mathbb{R}}
\newcommand{\N}{\mathbb{N}}
\newtheorem{rem}[teor]{Remark}
\newcommand{\To}{\longrightarrow}
\newcommand{\U}{\mathscr{U}}
\def\PO{\operatorname{PO}}
\def\d{\operatorname{d}}
\newcommand{\aproof}{\begin{proof}}
\newcommand{\zproof}{\end{proof}}
\def\Xii{(X_i)_{i\in I}}
\def\d{\operatorname{dist}}
\def\dens{\operatorname{dens}}
\def\dom{\operatorname{dom}}
\def\codom{\operatorname{cod}}
\def\e{\varepsilon}
\def\N{\mathbb{N}}
\def\vp{\varphi}
\def\R{\mathbb R}
\def\ran{\operatorname{ran}}
\begin{document}

\title{Banach spaces of universal disposition}

\author[A. Avil\'es, F. Cabello S\'anchez, J. M. F. Castillo, M. Gonz\'alez and Y.
Moreno]{Antonio Avil\'es, F\'elix Cabello S\'anchez, Jes\'us M. F.
Castillo, Manuel Gonz\'alez and Yolanda Moreno}

%\author{Antonio Avil\'es}
\address{Departamento de Matem\'aticas, Universidad de Murcia, 30100 Espinardo,
Murcia, Spain} \email{avileslo@um.es}

%\author{F\'elix Cabello S\'anchez}
\address{Escuela de Ingenieros T\'ecnicos Industriales, Universidad de Extremadura, Avenida de Elvas s/n, 06071 Badajoz, Spain}
             \email{fcabello@unex.es}

%\author{Jes\'us M. F. Castillo}
\address{Departamento de Matem\'aticas, Universidad de Extremadura, Avenida de Elvas s/n, 06071 Badajoz, Spain}
             \email{castillo@unex.es}

%\author{Manuel Gonz\'alez}
\address{Departamento de Matem\'aticas, Universidad de Cantabria, Avenida los Castros s/n, 39071 Santander, Spain}
             \email{manuel.gonzalez@unican.es}

%\author{Yolanda Moreno.}
\address{Escuela Polit\'ecnica, Universidad de Extremadura, Avenida de la Universidad s/n, 10071 C\'aceres, Spain}
             \email{ymoreno@unex.es}
%    Remove any unused author tags.

%    author one information

%Departamento de Matem\'aticas, Universidad de Extremadura}

\thanks{2010 Class. subject: 46A22, 46B04, 46B08,
46B26.}

\thanks{The first author was supported by MEC and FEDER (Project
MTM2008-05396), Fundaci\'{o}n S\'{e}neca (Project 08848/PI/08) and
Ram\'on y Cajal contract (RYC-2008-02051). The research of the
other four authors has been supported in part by project
MTM2010-20190}

\maketitle

\begin{abstract}In this paper we present a method to obtain Banach spaces of universal and almost-universal
disposition with respect to a given class $\mathfrak M$ of normed
spaces. The method produces, among other, the Gurari\u{\i} space
$\mathcal G$ (the only separable Banach space of almost-universal
disposition with respect to the class $\mathfrak F$ of finite
dimensional spaces), or the Kubis space $\mathcal K$ (under {\sf
CH}, the only Banach space with the density character the
continuum which is of universal disposition with respect to the
class $\mathfrak S$ of separable spaces). We moreover show that
$\mathcal K$ is not isomorphic to a subspace of any $C(K)$-space
-- which provides a partial answer to the injective space
problem-- and that --under {\sf CH}-- it is isomorphic to an
ultrapower of the Gurari\u{\i} space.

We study further properties of spaces of universal disposition:
separable injectivity, partially automorphic character and
uniqueness properties.\\

\end{abstract}

\section{Spaces of universal and almost-universal disposition}\label{sec:ud}

In   \cite{Gurariiold}  Gurari\u{\i} introduces the notions of
spaces of universal and almost-universal disposition for a given
class $\mathfrak M$ as follows.

\begin{defin}\label{unidisp} Let $\mathfrak M$ be a class of Banach spaces.
\begin{enumerate}

\item A Banach space $U$ is said to be of almost universal
disposition for the class $\mathfrak M$ if, given $A,B\in\mathfrak
M$, isometric embeddings $u: A\to U$ and $\imath:A\to B$, and
$\e>0$, there is a $(1+\e)$-isometric embedding $u': B\To U$ such
that $u=u'\imath$.

\item  A Banach space $U$ is of universal disposition for the
class $\mathfrak M$ if, given $A,B\in\mathfrak M$ and isometric
embeddings $u: A\to U$ and $\imath:A\to B$,  there is an isometric
embedding $u': B\to U$ such that $u=u'\imath$. \end{enumerate}
\end{defin}

Gurari\u{\i} shows that there exists a separable Banach space of
almost-universal disposition for the class $\mathfrak F$ of finite
dimensional spaces \cite[Theorem 2]{Gurariiold}. We recall now the
main properties of Gurari\u\i's creature. First, it is clear that
two separable Banach spaces of almost-universal disposition for
finite dimensional spaces are almost isometric
---this is shown by an obvious back-and-forth argument in
\cite[Theorem~4]{Gurariiold}. A different and simpler description
of Gurari\u\i\ space(s) by means of triangular matrices was
provided by Lazar and Lindenstrauss\ in \cite[Theorem
5.6]{lazarlind}. On the other hand, Pe\l czy\'nski and Wojtaszczyk
show in \cite{p-w} that the family of separable Lindenstrauss\
spaces has a maximal member: there is a separable Lindenstrauss\
space $\mathcal{P\!W}$ having the following property:  for every
separable Lindenstrauss\ space $X$ and each $\e>0$, there is an
operator $u:X\to  \mathcal{P\!W}$ such that $\|x\|\leq\|u(x)\|\leq
(1+\e)\|x\|$ and a contractive projection of $\mathcal{P\!W}$ onto
the range of $u$. One year later Wojtaszczyk \cite{woj} himself
shows that $\mathcal{P\!W}$ can be constructed as a space of
almost universal disposition for finite dimensional spaces.
Finally, Lusky shows in \cite{luskygura} that two separable spaces
of almost-universal disposition for finite-dimensional Banach
spaces are isometric. Therefore, there exists a unique separable
space of almost-universal disposition for finite dimensional
Banach spaces, that we will call the Gurari\u{\i} space and denote
by $\mathcal G$.

Gurari\u{\i} conjectured the existence of spaces of universal
disposition for the classes $\mathfrak F$ of finite dimensional
spaces and $\mathfrak S$ of separable spaces: see the footnote to
Theorem~5 in \cite{Gurariiold}. We will present a method able to
effectively generate such examples, as well as other spaces of
universal or almost-universal disposition, such as the
Gurari\u{\i} space \cite{Gurariiold} or the Fra\"iss\'e limit
constructed by Kubis \cite{kubis}.

\section{Background}

Our notation is fairly standard, as in \cite{lindtzaf}. A Banach
space $X$ is said to be an $\mathcal{L}_{\infty,\lambda}$-space
with $\lambda \geq 1$) if every finite dimensional subspace $F$ of
$X$ is contained in another finite dimensional subspace of $X$
whose Banach-Mazur distance to the corresponding $\ell_\infty^n$
is at most $\lambda$. A space $X$ is said to be a
$\mathcal{L}_\infty$-space if it is a
$\mathcal{L}_{\infty,\lambda}$-space for some $\lambda \geq 1$; we
will say that it is a Lindenstrauss space if it is a
$\mathcal{L}_{\infty,1 + \varepsilon}$-space for all
$\varepsilon>0$. Throughout the paper, {\sf ZFC} denotes the usual
setting of set theory with the Axiom of Choice, while {\sf CH}
denotes the continuum hypothesis ($\mathfrak c = \aleph_1$).

\subsection{The push-out construction} The push-out construction appears naturally when one
considers a couple of operators defined on the same space, in
particular in any extension problem. Let us explain why. Given
operators $\alpha:Y\to A$ and $\beta:Y\to B$, the associated
push-out diagram is
\begin{equation}\label{po-dia}
\begin{CD}
Y@>\alpha>> A\\
@V \beta VV @VV \beta' V\\
B @> \alpha' >> \PO
\end{CD}
\end{equation}
Here, the push-out space $\PO=\PO(\alpha,\beta)$ is quotient of
the direct sum $A\oplus_1 B$, the product space endowed with the
sum norm, by the closure of the subspace $\Delta=\{(\alpha
y,-\beta y): y\in Y\}$. The map $\alpha'$ is given by the
inclusion of $B$ into $A\oplus_1 B$ followed by the natural
quotient map $A\oplus_1 B\to (A\oplus_1 B)/\overline\Delta$, so
that $\alpha'(b)=(0,b)+\overline\Delta$ and, analogously,
$\beta'(a)=(a,0)+\overline\Delta$.

The diagram (\ref{po-dia}) is commutative:
$\beta'\alpha=\alpha'\beta$. Moreover, it is `minimal' in the
sense of having the following universal property: if $\beta'':A\to
C$ and $\alpha'':B\to C$ are operators such that
$\beta''\alpha=\alpha''\beta$, then there is a unique operator
$\gamma:\PO\to C$ such that $\alpha''=\gamma\alpha'$ and
$\beta''=\gamma\beta'$. Clearly, $\gamma((a, b) +
\overline{\Delta}) = \beta''(a)+\alpha''(b)$ and one has
$\|\gamma\|\leq \max \{\|\alpha''\|, \|\beta''\|\}$. Regarding the
behaviour of the maps in diagram~(\ref{po-dia}), apart from the
obvious fact that both $\alpha'$ and $\beta'$ are contractive, we
have:

\begin{lema}\label{isom}$\;$
\begin{itemize}
\item[(a)] If $\alpha$ is an isomorphic embedding, then $\Delta$
is closed. \item[(b)] If $\alpha$ is an isometric embedding and
$\|\beta\|\leq 1$ then $\alpha'$ is an isometric embedding.
\item[(c)] If $\alpha$ is an isomorphic embedding then $\alpha'$
is an isomorphic embedding. \item[(d)] If $\|\beta\|\leq 1$ and
$\alpha$ is an isomorphism then $\alpha'$ is an isomorphism and
$$\|(\alpha')^{-1}\|\leq \max \{1, \|\alpha\|\}.$$
\end{itemize}
\end{lema}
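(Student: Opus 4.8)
The plan is to work directly with the explicit description of the push-out space $\PO = (A\oplus_1 B)/\overline\Delta$ given above, and to push each statement through the quotient norm formula
\[
\|(a,b)+\overline\Delta\| = \inf_{y\in Y}\bigl(\|a-\alpha y\| + \|b+\beta y\|\bigr).
\]
For part (a), I would show that if $\alpha$ is an isomorphic embedding then $\Delta$ is already closed: if $(\alpha y_n, -\beta y_n)\to (a,b)$ in $A\oplus_1 B$, then in particular $\alpha y_n\to a$, and since $\alpha$ is bounded below, $(y_n)$ is Cauchy in $Y$, hence converges to some $y$, and by continuity $(a,b)=(\alpha y,-\beta y)\in\Delta$. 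From now on I may write $\Delta$ for $\overline\Delta$.

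For part (b), assume $\alpha$ is an isometric embedding and $\|\beta\|\le 1$; I must show $\|\alpha'(b)\| = \|(0,b)+\Delta\| = \|b\|$ for all $b\in B$. The inequality $\le\|b\|$ is clear (take $y=0$). For the reverse, fix $y\in Y$; then by the triangle inequality and $\|\beta\|\le1$,
\[
\|{-\alpha y}\| + \|b+\beta y\| \ge \|\alpha y\| + \|b\| - \|\beta y\| = \|y\| + \|b\| - \|\beta y\| \ge \|b\|,
\]
using $\|\alpha y\|=\|y\|$ and $\|\beta y\|\le\|y\|$. Taking the infimum over $y$ gives $\|\alpha'(b)\|\ge\|b\|$, as desired.

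For part (c), suppose $\alpha$ is an isomorphic embedding, so there is $c>0$ with $\|\alpha y\|\ge c\|y\|$ for all $y$; I claim $\|\alpha'(b)\|\ge \min\{1,c/(1+\|\beta\|)\}\,\|b\|$, say, which suffices. For fixed $y$: if $\|\beta y\|\le\tfrac12\|b\|$ then $\|b+\beta y\|\ge\tfrac12\|b\|$; if $\|\beta y\|>\tfrac12\|b\|$ then $\|y\|>\|b\|/(2\|\beta\|)$, so $\|\alpha y\|\ge c\|b\|/(2\|\beta\|)$ (and if $\beta=0$ the first case always applies). In either case $\|{-\alpha y}\|+\|b+\beta y\|$ is bounded below by a fixed positive multiple of $\|b\|$; taking the infimum over $y$ gives the claim. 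Part (d) is the main obstacle, and is where care is needed: assume $\|\beta\|\le 1$ and $\alpha\colon Y\to A$ is an isomorphism (onto $A$). I would prove $\alpha'\colon B\to\PO$ is surjective and estimate $\|(\alpha')^{-1}\|$. Surjectivity: given $(a,b)+\Delta\in\PO$, put $y=\alpha^{-1}a\in Y$; then $(a,b)-(\alpha y,-\beta y)=(0,\,b+\beta y)$, so $(a,b)+\Delta = \alpha'(b+\beta y)$, proving $\alpha'$ is onto. By part (c) $\alpha'$ is an isomorphic embedding, hence an isomorphism, and combining with part (b)'s type of estimate one computes, for $b'=b+\beta y$,
\[
\|b'\| = \|b + \beta\alpha^{-1}a\| \le \|b\| + \|\alpha^{-1}\|\,\|a\| \le \max\{1,\|\alpha^{-1}\|\}\bigl(\|a\|+\|b\|\bigr),
\]
and since this holds for the particular representative $(a,b)$ it must be reconciled with the infimum defining $\|(a,b)+\Delta\|$; taking the infimum over representatives on the right shows $\|(\alpha')^{-1}(a,b+\Delta)\| = \|b'\|$ only after checking $b'$ is independent of the representative (it is, since changing $(a,b)$ by $(\alpha z,-\beta z)$ changes $y$ by $z$ and leaves $b+\beta y$ fixed), and then one gets $\|(\alpha')^{-1}\|\le\max\{1,\|\alpha^{-1}\|\}$. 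I expect the bookkeeping around which representative to use — and verifying that $\|\alpha^{-1}\|$ rather than $\|\alpha\|$ (the statement says $\|\alpha\|$, which presumably should be read with the convention that $\|\alpha\|\ge 1$ for an isometric-type normalization, or is a typo for $\|\alpha^{-1}\|$) appears with the right exponent — to be the only genuinely delicate point.
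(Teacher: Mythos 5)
Your proof is correct and follows essentially the same route as the paper's: everything is read off from the quotient norm $\|(a,b)+\Delta\|=\inf_y\bigl(\|a-\alpha y\|+\|b+\beta y\|\bigr)$, with your well-definedness argument for $b'=b+\beta\alpha^{-1}a$ in (d) being just a repackaging of the paper's infimum-over-$y'$ estimate (you merely supply details for (a) and (c), which the paper declares ``clear''). You are also right that the bound in (d) should read $\|(\alpha')^{-1}\|\leq\max\{1,\|\alpha^{-1}\|\}$; the $\|\alpha\|$ in the statement is a typo, as the paper's own proof and its later citation of the analogous result confirm.
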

\begin{proof} (a) is clear. (b) If $\|\beta\|\leq 1$,
$$
\|\alpha'(b)\| = \|(0,b)+ \Delta\| = \inf_{y\in Y}  \|\alpha y\|+
\|b - \beta y\|  \geq  \inf_{y} \|\beta y\|+ \|b - \beta y\| \geq
\| b\|,
$$
as required. (c) is clear after (b). (d) To prove the assertion
about $(\alpha')^{-1}$, notice that for all $a\in A$ and $b\in B$
one has $(a,b) + \delta = (0, b+\beta y) + \delta$ for $y\in Y$
such that $\alpha y=a$. Therefore, for all $y'\in Y$ one has
\begin{eqnarray*} \|b +
\beta y \| &\leq & \|b + \beta y + \beta y'\| + \|\beta y'\| \\
&\leq& |b + \beta y + \beta y'\| + \|y'\| \\ &\leq& \|b + \beta y
+ \beta y'\| + \|\alpha^{-1}\| \|\alpha y'\|\end{eqnarray*} from
where the assertion follows.
\end{proof}

A Banach space $E$ is said to be (separably) injective if for
every (separable) Banach space $X$ and each subspace $Y\subset X$,
every operator $t:Y\to E$ extends to an operator $T:X\to E$. If
some extension $T$ exists with $\|T\|\leq\lambda \|t\|$ we say
that $E$ is $\lambda$-separably injective. Following
\cite{accgmsi}, a Banach space $E$ is said to be universally
separably injective if for every Banach space $X$ and each
separable subspace $Y\subset X$, every operator $t:Y\to E$ extends
to an operator $T:Y\to X$. If some extension $T$ exists with
$\|T\|\leq\lambda \|t\|$ we say that $E$ is universally
$\lambda$-separably injective.

\section{The basic construction}\label{ch:universal}

Let us consider an isometric embedding $u:A\to B$ and an operator
$t: A\to E$. We want to extend $t$ through $u$, probably at the
cost of replacing $E$ by a larger space. The push-out diagram
$$
\begin{CD}
A @> u >> B\\
@Vt VV @VV t' V\\
E@> u' >> \PO\\
\end{CD}
$$
does exactly what we ask: $t'u=u't$. It is important to realize
that $u'$ is again an isometric embedding and that $t'$ is a
contraction (resp. an isometric embedding) if $t$ is; see
Lemma~\ref{isom}. What we need is to be able to do the same with a
previously established family of embeddings. The input data for
the construction are:
\begin{itemize}
\item A Banach space $E$.
\item A family $\frak J$ of isometric embeddings between certain Banach spaces.
\item A family $\frak L$ of norm one operators from certain Banach spaces to $E$.
\end{itemize}

Each member of $\frak J$ is, by definition, an isometric embedding
$u:A\to B$, where $A$ and $B$ are Banach spaces. Then $A=\dom u$
is the domain of $u$ and $B=\codom u$ is the codomain. Let us
remark that $\codom u$ is usually larger than the image of $u$.
This implicitly yields three families of Banach spaces:
\begin{itemize}
\item $\dom \frak J=\{\dom u:u\in\frak J\}$,
\item $\codom \frak J=\{\codom u:u\in\frak J\}$,
\item $\dom \frak L=\{\dom t:t\in\frak L\}.
$\end{itemize}
To avoid complications we will assume that $\frak J$ and $\frak L$ are sets and also that $\dom \frak J=\dom \frak L$.
Notice that the only element of $\codom\frak L$ is $E$.

Set $\Gamma=\{(u,t)\in \frak J\times \frak L: \dom u=\dom t\}$ and
consider the Banach spaces of summable families $\ell_1(\Gamma,
\dom u)$ and $\ell_1(\Gamma, \codom u)$. We have an obvious
isometric embedding
$$
\oplus\frak J :\ell_1(\Gamma, \dom u)\To \ell_1(\Gamma, \codom
u)$$ defined by  $(x_{(u,t)})_{(u,t)\in\Gamma}\longmapsto
(u(x_{(u,t)}))_{(u,t)\in\Gamma} $; and a contraction
$$
\Sigma\frak L :\ell_1(\Gamma, \dom u)\To E,$$ given by
$(x_{(u,t)})_{(u,t)\in\Gamma}\longmapsto
\sum_{(u,t)\in\Gamma}t(x_{(u,t)})$. (Observe that the notation is
slightly imprecise since both $ \oplus\frak J$ and $ \Sigma\frak
L$ depend on $\Gamma$). We can form their push-out diagram
$$\begin{CD}
\ell_1(\Gamma, \dom u)@> \oplus\frak J >> \ell_1(\Gamma, \codom u)\\
@V \Sigma\frak L VV @VVV\\
E @>>> \PO.
\end{CD}
$$
In this way we obtain an isometric enlargement $\imath: E \to \PO$
such that every operator $t: A\to E$  in $\mathfrak L$ can be
extended to an operator $t':B \to \PO$ through any embedding
$u:A\to B$ in $\frak J$ provided $\dom u=\dom t= A$. In the step
$\alpha$ we leave the family $\frak J$ fixed, replace $E$ by $\PO$
and $\frak L$ by another family  $\frak L_\alpha$ and proceed
again.
%Setting $\Gamma_1=\{(u,t)\in \frak J\times \frak L_1: \dom u=\dom t\}$ we construct the push-out diagram
%$$\begin{CD}
%\ell_1(\Gamma_1, \dom u)@> \bigoplus_{\Gamma_1}\frak J >> \ell_1(\Gamma_1, \codom u)\\
%@V \sum_{\Gamma_1}\frak L VV @VVV\\
%P_1 @>>> P_2
%\end{CD}
%$$

In this way, one to iterate this construction until any countable
or uncountable ordinal. Moreover, a careful choice of the families
$\frak L_\alpha$ in the successive steps allows one to produce spaces of universal disposition, as we will see now. \\

We pass to present some specific constructions in detail. We fix a
Banach space $X$. We would take $\mathfrak J$ as the family of all
isometric embeddings between separable Banach spaces. Avoiding the
details required to fix the inconvenient that the class of
separable Banach spaces is not a set, let $\frak S$ be the family
all separable Banach spaces up to isometries. The initial step of
the construction is performed with the space $X$, the set
$\mathfrak I$ of all isometric embeddings acting between the
elements of $\frak S$ and the set $\frak L$ of all norm one
operators $t:S\to X$, where $S\in\frak S$.

We are going to define Banach spaces ${\mathfrak S}^\alpha=
{\mathfrak S}^\alpha(X)$ for all ordinals $\alpha$ starting with
$\mathfrak S^0=X$ and in such a way that, for  $\alpha <\beta$,
there is an isometric embedding
$\imath_{(\alpha,\beta)}:{\mathfrak S}^\alpha\to \mathfrak
S^\beta$. The embeddings must satisfy the obvious compatibility
condition that
$\imath_{(\beta,\gamma)}\imath_{(\alpha,\beta)}=\imath_{(\alpha,\gamma)}$
if $\alpha <\beta<\gamma$. In particular,  for each ordinal
$\alpha$, we have an embedding $\imath_{(0,\alpha)}:X\To
{\mathfrak S}^\alpha$.

We use transfinite induction as follows. Suppose $\mathfrak
S^\beta$ and the corresponding embeddings defined for each
$\beta<\alpha$. If $\alpha=\beta+1$ is a successor ordinal, we
consider the following data:
\begin{itemize}
\item The Banach space $\mathfrak S^\beta$, \item the set $\frak
I$ of all isometric embeddings acting between the elements of
$\frak S$, and \item the set $\frak L_\beta$ of all norm one
operators $t:S\to \mathfrak S^\beta$, where $S\in\frak S$.
\end{itemize}
Then we set $\Gamma_\beta=\{(u,t)\in \frak I\times \frak L_\beta: \dom u=\dom t\}$ and we form the push-out diagram
\begin{equation}\label{see0}
\begin{CD}
\ell_1(\Gamma_\beta, \dom u)@> \oplus\frak I_\beta >> \ell_1(\Gamma_\beta, \codom u)\\
@V \Sigma\frak L_\beta VV @VVV\\
\mathfrak S^\beta @>>> \PO
\end{CD}
\end{equation}
thus obtaining ${\mathfrak S}^\alpha=\mathfrak S^{\beta+1}=\PO$.
The embedding $\imath_{(\beta,\alpha)}$ is the lower arrow in the
above diagram and the other embeddings are given by composition
with $\imath_{(\beta,\alpha)}$. If $\alpha$ is a limit ordinal we
take ${\mathfrak S}^\alpha$ as the direct limit $\lim_{\beta
<\alpha} \mathfrak S^\beta$, with the obvious embeddings. Two
variations of this construction will be considered:
\begin{itemize}

\item Replace $\mathfrak S$ by the smaller family $\frak F$
generated by the finite dimensional spaces in $\frak S$. We will
call the final space $\mathfrak F^{\alpha}(X)$.

\item  We leave the initial family of Banach spaces $\frak S$, but
replace $\mathfrak I$ by the  set $\frak I^\infty$ of all
isometric embeddings of the spaces of $\frak S$ into
$\ell_\infty$, so that $\codom u=\ell_\infty$ for every $u\in\frak
I^\infty$. The choice of $\frak L$ is the same as before: all
contractive operators from the spaces in $\frak S$ to $X$. Now, we
proceed as before to construct a family of Banach spaces
$\mathfrak  U^\alpha= \mathfrak  U^\alpha(X)$ together with the
corresponding compatible linking embeddings. The passage from
$\mathfrak U^\alpha$ to $\mathfrak U^{\alpha+1}$ is as follows. We
set $\frak L_\alpha$ as the set of all norm one operators from the
spaces of $\frak S$ to $\mathfrak U^\alpha$ and
 $\Gamma_\alpha=\{(u,t)\in \frak I^\infty\times \frak L_\alpha: \dom u=\dom t\}$ and we form the push-out diagram
\begin{equation}\label{see1}
\begin{CD}
\ell_1(\Gamma_\alpha, \dom u)@> \oplus\frak I^\infty >> \ell_1(\Gamma_\alpha, \ell_\infty)\\
@V \Sigma\frak L_\alpha VV @VVV\\
\mathfrak U^\alpha @>>> \PO
\end{CD}
\end{equation}
thus obtaining $\mathfrak U^{\alpha+1}=\PO$. The embedding
$\imath_{(\alpha,\alpha+1)}$ is the lower arrow in the above
diagram and the other embeddings are given by composition with
$\imath_{(\alpha,\alpha+1)}$.
\end{itemize}

\begin{prop}\label{construction} Let $X$ be a Banach space.
\begin{itemize}
\item[(a)] The spaces $\mathfrak S^{\omega_1}(X)$ and $\mathfrak
U^{\omega_1}(X)$ are of universal disposition for separable Banach
spaces. \item[(b)] The space $\mathfrak F^{\omega_1}(X)$ is of
universal disposition for finite-dimensional Banach spaces.
\end{itemize}
\end{prop}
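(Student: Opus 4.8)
The plan is to verify condition (2) of Definition~\ref{unidisp} directly, after isolating two ingredients: a quantitative one-step extension property of the push-outs~(\ref{see0}) and~(\ref{see1}), and a reflection argument that uses the length $\omega_1$ of the iteration. Throughout, recall that, being a direct limit of isometric embeddings, $\mathfrak{S}^{\omega_1}(X)$ is the closure of $\bigcup_{\gamma<\omega_1}\imath_{(\gamma,\omega_1)}(\mathfrak{S}^\gamma(X))$, and that each linking map $\imath_{(\beta,\omega_1)}$ is an isometric embedding; likewise for $\mathfrak{U}^{\omega_1}(X)$ and $\mathfrak{F}^{\omega_1}(X)$.

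\emph{One-step property.} First I would show: in diagram~(\ref{see0}), for every pair $(u_0,t_0)\in\Gamma_\beta$ with $t_0$ an isometric embedding, the composition of the isometric inclusion $\kappa\colon\codom u_0\to\ell_1(\Gamma_\beta,\codom u)$ into the $(u_0,t_0)$-th coordinate with the right-hand vertical map of~(\ref{see0}) is an isometric embedding $t_0'\colon\codom u_0\to\mathfrak{S}^{\beta+1}(X)$ satisfying $t_0'u_0=\imath_{(\beta,\beta+1)}t_0$. The commutation relation follows by evaluating the commutativity of the square~(\ref{see0}) on the family supported in the $(u_0,t_0)$-th coordinate. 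For the isometry statement I would unwind the push-out quotient norm: since every member of $\mathfrak{I}$ is an isometry and every member of $\mathfrak{L}_\beta$ has norm $\le1$, a triangle-inequality estimate shows the infimum defining $\|t_0'(b)\|$ is attained on families supported in the $(u_0,t_0)$-th coordinate alone, whence $\|t_0'(b)\|=\inf_a(\|b-u_0a\|+\|t_0a\|)$; as $t_0$ is isometric this equals $\inf_a(\|b-u_0a\|+\|u_0a\|)$, which is $\ge\|b\|$ by the triangle inequality and $\le\|b\|$ by taking $a=0$. The same computation handles~(\ref{see1}), where $\codom u_0=\ell_\infty$. (This is Lemma~\ref{isom}(b) made coordinate-wise and quantitative.)

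\emph{Reflection and conclusion.} The crucial step is that every isometric embedding $u\colon A\to\mathfrak{S}^{\omega_1}(X)$ with $A$ separable factors through a countable stage. Indeed $u(A)$ is separable, and each point of a countable dense subset of $u(A)$ is approximated using elements coming from only countably many stages, so, $\omega_1$ having uncountable cofinality, there is $\beta<\omega_1$ above all of these stages; since $\imath_{(\beta,\omega_1)}(\mathfrak{S}^\beta(X))$ is complete, hence closed, $u(A)\subseteq\imath_{(\beta,\omega_1)}(\mathfrak{S}^\beta(X))$, and $v:=(\imath_{(\beta,\omega_1)})^{-1}u\colon A\to\mathfrak{S}^\beta(X)$ is a well-defined isometric embedding with $u=\imath_{(\beta,\omega_1)}v$. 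Now, given separable $A,B$ and isometric embeddings $u\colon A\to\mathfrak{S}^{\omega_1}(X)$ and $\imath\colon A\to B$, after replacing $A,B,\imath,v$ by isometric copies living in $\mathfrak{S}$ (a routine identification), the pair $(\imath,v)$ belongs to $\Gamma_\beta$, so the one-step property yields an isometric embedding $v'\colon B\to\mathfrak{S}^{\beta+1}(X)$ with $v'\imath=\imath_{(\beta,\beta+1)}v$, and then $u':=\imath_{(\beta+1,\omega_1)}v'$ is an isometric embedding with $u'\imath=\imath_{(\beta,\omega_1)}v=u$ --- which is (2) of Definition~\ref{unidisp}. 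For $\mathfrak{F}^{\omega_1}(X)$ this is word-for-word the same with ``finite-dimensional'' in place of ``separable''. For $\mathfrak{U}^{\omega_1}(X)$ one extra move is needed, since the members of $\mathfrak{I}^\infty$ have codomain $\ell_\infty$: fix an isometric embedding $\jmath\colon B\to\ell_\infty$, apply the one-step property to the pair $(\jmath\imath,v)\in\Gamma_\beta$ to obtain an isometric embedding $w\colon\ell_\infty\to\mathfrak{U}^{\beta+1}(X)$ with $w\jmath\imath=\imath_{(\beta,\beta+1)}v$, and set $u':=\imath_{(\beta+1,\omega_1)}w\jmath$.

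The only genuinely load-bearing point is the reflection step: it is the one place where the iteration length $\omega_1$ (rather than $\omega$) is used, and it is what forces every incoming separable space to be ``visible'' at some earlier stage, so that the push-out carried out there performs the extension. Everything else is a careful but routine unwinding of the $\ell_1$-quotient norm, together with the set-theoretic housekeeping --- treating $\mathfrak{S}$ and $\mathfrak{F}$ as sets and passing to isometric representatives --- already flagged in Section~\ref{ch:universal}.
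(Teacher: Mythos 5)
Your argument is correct and takes essentially the same route as the paper's proof: reflect the separable isometric embedding into some stage $\mathfrak{S}^\beta$ with $\beta<\omega_1$ via uncountable cofinality, then note that the push-out performed at stage $\beta$ already contains the pair $(\imath,v)$ as a coordinate and supplies the isometric extension, verified by unwinding the $\ell_1$-quotient norm. Your extra care with the infimum over all coordinates and your explicit treatment of $\mathfrak{U}^{\omega_1}(X)$ by first embedding $B$ into $\ell_\infty$ only fill in details the paper leaves to the reader.
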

\begin{proof} We write the proof for $\mathfrak S^{\omega_1}=\mathfrak
S^{\omega_1}(X)$. The case $\mathfrak U^{\omega_1}(X)$ is
analogous and we leave it to the reader.

We must show that is $v:A\to B$ and $\ell: A\to \mathfrak
S^{\omega_1}$ are isometric embeddings and $B$ is separable, then
there is an isometric embedding $L:B\to \mathfrak S^{\omega_1}$
such that $Lv=\ell$. We may and do assume $A, B\in\frak S$  so
that $v$ is in $\frak I$. On the other hand there is $\alpha<
\omega_1$ such that $\ell(A)\subset \mathfrak S^\alpha$ and we may
consider that $\ell$ is one of the operators in $\frak L_\alpha$.
Therefore $\ell$ has an extension $\ell'$ making the following
square commutative:
$$
\begin{CD}
A @> v >> B\\
@V\ell VV @VV \ell' V\\
{\mathfrak S}^\alpha@>\imath_{(\alpha,\alpha+1)}>> \mathfrak
S^{\alpha+1}.
\end{CD}
$$
Actually $\ell'$ is the composition of the inclusion
$\jmath_{(v,\ell)}$ of $B=\codom v$ into the $(v,\ell)$-th
coordinate of $\ell_1(\Gamma_\alpha, \codom u)$ with the right
descending arrow in the diagram
\begin{equation}\label{see2}
\begin{CD}
\ell_1(\Gamma_\alpha, \dom u)@> \oplus\frak I >> \ell_1(\Gamma_\alpha,\codom u)\\
@V \Sigma\frak L_\alpha VV @VVV\\
{\mathfrak S}^\alpha @>>> \PO= \mathfrak S^{\alpha+1}.
\end{CD}
\end{equation}
We known that $\ell'$ is a contraction and we must prove  it is
isometric. We have
$$
\PO=({\mathfrak S}^\alpha\oplus_1 \ell_1(\Gamma_\alpha,\codom
u))/\Delta\quad\text{with}\quad\Delta=\left\{\left(\sum_{(u,t)\in\Gamma_\alpha}
tx_{(u,t)}, - \sum_{(u,t)\in\Gamma_\alpha}
ux_{(u,t)}\right)\right\}.
$$
Thus, for $b\in B$ we have $\ell'(b)=(0,\jmath_{(v,\ell)}
b)+\Delta$ and
$$
%\begin{aligned}
\|\ell'(b)\|_{\PO}=\d((0,\jmath_{(v,\ell)} b),\Delta)=\inf_{a\in
A}\left\{\|\ell(a)\|_{{\mathfrak
S}^\alpha}+\|b-v(a)\|_B\right\}=\|b\|_B
%\end{aligned}
$$
since both $\ell$ and $v$ preserve the norm.

The proof of (b) is left to the reader.
\end{proof}

This is the space Gurari\u{\i} conjectured. We will later show
that --under {\sf CH}-- such space is unique and coincides with
the Fra\"iss\'e limit in the category of separable Banach spaces
and into isometries constructed by Kubis \cite{kubis}; and also
with an ultrapower of Gurari\u{\i} space. For our purposes it is
enough to stop the constructions at $\omega_1$, however it is not
hard to believe that a careful choice of the cardinal $\alpha$ can
produce spaces $\mathfrak S^\alpha$ with special properties.
Observe that, say, $\mathfrak S^{\omega_1+1}(X)$ is not of
universal disposition for $\mathfrak S$.

\section{Properties of spaces of universal disposition}

Gurari\u{\i} shows in \cite{Gurariiold} that a space of universal
disposition for all finite-dimensional spaces cannot be separable
since no separable Banach space can be of universal disposition
for the couple $\{\R,\R\times\R\}$: indeed, a Banach space of
universal disposition for $\R$ has also been called transitive
--and the still open Mazur's rotation problem asks whether a
separable transitive Banach space must be Hilbert--; and it is
well known that the norm of a transitive space must be
differentiable at every point. This prevents the space from being
of universal disposition for $\R^2$. On the other hand it is clear
that, if the starting space $X$ is separable, then the Banach
spaces appearing in Propositions~\ref{construction} have density
character $\frak c$ since each of them is the union of an
$\omega_1$ sequence formed by Banach spaces of density $\frak c$.

\begin{prop}\label{cw1}
A Banach space of universal disposition for separable spaces must
have density at least $\frak c$ and contains an isometric copy of
each Banach space of density $\aleph_1$ or less.
\end{prop}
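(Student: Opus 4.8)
The plan is to prove the two assertions separately. For the density statement, I would argue by contradiction: suppose $U$ has density character strictly less than $\mathfrak c$. The standard obstruction comes from considering how many separable subspaces a non-separable space of small density can have, and comparing this to the number of \emph{non-isomorphic} (or rather non-isometric) separable spaces that must embed. More precisely, $U$ must contain an isometric copy of every separable space; but there are $\mathfrak c$ many pairwise non-isometric separable Banach spaces, and each must sit inside $U$. A cleaner route: using universal disposition for the pair $(\{0\}, S)$ with $S$ separable, we already see every separable $S$ embeds isometrically into $U$. Now use the amalgamation property encoded in the definition to build, by transfinite recursion of length $\omega_1$, a strictly increasing chain of separable subspaces of $U$; since each proper isometric extension of a separable space inside $U$ can be realized, the union has density $\aleph_1$, forcing $\dens U \geq \aleph_1$. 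To push this up to $\mathfrak c$, I would instead embed a \emph{single} separable space that cannot live in any space of density $<\mathfrak c$ — but no such separable space exists, so the $\mathfrak c$ bound must instead be extracted from a counting/packing argument: $U$ must contain an isometric copy of $\ell_\infty$ is false in general, so the right tool is that $U$ contains isometrically every space of density $\aleph_1$ (the second assertion), and a space of density $\aleph_1$ of the form $c_0(\aleph_1)$ or $C([0,\omega_1])$ cannot embed into a space of density $<\aleph_1$; combined with a separating family of $\mathfrak c$ such spaces one gets $\dens U\geq\mathfrak c$. I expect the density-exactly-$\mathfrak c$ part to be the main obstacle and may require invoking that there is a family of $2^{\aleph_0}$ pairwise "incompatible" separable spaces whose simultaneous embedding forces density $\mathfrak c$; if that is not clean, I would settle the statement as written (density at least $\mathfrak c$) by first proving the $\aleph_1$-embedding claim and then noting any space containing all spaces of density $\aleph_1$ contains $\mathfrak c$-many "independent" ones.

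For the embedding assertion, which I expect to be the substantive part, the plan is a back-and-forth / transfinite amalgamation argument. Let $Z$ be a Banach space with $\dens Z \leq \aleph_1$. Write $Z$ as the closure of an increasing union $Z = \overline{\bigcup_{\alpha<\omega_1} Z_\alpha}$ of separable subspaces $Z_\alpha$, with $Z_\alpha\subset Z_\beta$ for $\alpha<\beta$ and $Z_\lambda = \overline{\bigcup_{\alpha<\lambda}Z_\alpha}$ at limits $\lambda$ (a "continuous chain"). I will construct isometric embeddings $L_\alpha : Z_\alpha \to U$ that are coherent, i.e. $L_\beta|_{Z_\alpha} = L_\alpha$ for $\alpha<\beta$, by transfinite recursion. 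The base case $L_0 : Z_0 \to U$ exists because $U$ is of universal disposition (hence contains isometrically every separable space — apply the definition with $A = \{0\}$). At a successor step, given $L_\alpha : Z_\alpha\to U$, we have the separable space $Z_\alpha$, the isometric inclusion $\imath : Z_\alpha \hookrightarrow Z_{\alpha+1}$ with $Z_{\alpha+1}$ separable, and the isometric embedding $u = L_\alpha : Z_\alpha \to U$; universal disposition for separable spaces yields an isometric $L_{\alpha+1} : Z_{\alpha+1}\to U$ with $L_{\alpha+1}\imath = L_\alpha$, i.e. $L_{\alpha+1}$ extends $L_\alpha$. At a limit ordinal $\lambda$, the maps $(L_\alpha)_{\alpha<\lambda}$ are coherent isometries, so they glue to an isometry on $\bigcup_{\alpha<\lambda} Z_\alpha$, which extends by density to an isometric embedding $L_\lambda : Z_\lambda \to U$ (isometries are uniformly continuous, and $Z_\lambda$ is the completion of the union). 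Finally $L = \bigcup_{\alpha<\omega_1} L_\alpha$ is an isometry on the dense subspace $\bigcup_\alpha Z_\alpha$ of $Z$, hence extends uniquely to an isometric embedding $Z\to U$ by completeness of $U$.

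The one genuinely delicate point is the limit step: one must check that when $\lambda$ has uncountable cofinality — which does not happen below $\omega_1$, since every $\lambda<\omega_1$ has countable cofinality — but $\lambda = \omega_1$ itself has uncountable cofinality, the union $\bigcup_{\alpha<\omega_1} Z_\alpha$ need only be dense in $Z$, not all of $Z$; this is harmless because we only need a dense isometry to extend, and extension of an isometric embedding from a dense subspace into a complete space is automatic. For $\lambda<\omega_1$ a limit, $\mathrm{cf}(\lambda)=\omega$, so $Z_\lambda=\overline{\bigcup_{\alpha<\lambda}Z_\alpha}$ is again separable and the same density-extension argument applies. I would also note that the construction is entirely within \textsf{ZFC} (no \textsf{CH} needed for this particular statement): the length $\omega_1$ chain exists because $\dens Z\leq\aleph_1$, and universal disposition is used only for \emph{separable} amalgamations, which is exactly its hypothesis. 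The density lower bound $\dens U\geq\mathfrak c$ then follows: if $\dens U<\mathfrak c$ there would be a space of density $\aleph_1$ — or, more carefully, $\mathfrak c$-many separable spaces with no common $<\mathfrak c$-dimensional host — embedding isometrically into $U$, contradicting $\dens U<\mathfrak c$; the cleanest concrete witness is that $U$ must contain $\ell_\infty$'s predual-type spaces of density $\aleph_1$, hence $\dens U\geq\aleph_1$, and then a family of $\mathfrak c$ mutually "singular" such spaces forces $\dens U\geq\mathfrak c$.
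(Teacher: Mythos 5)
Your second assertion---that $U$ contains an isometric copy of every space of density at most $\aleph_1$---is proved correctly and by essentially the same method as the paper: write the target space as a continuous increasing $\omega_1$-chain of separable subspaces starting from $0$, extend the embedding at successor steps by universal disposition, glue at limits, and pass to the closure at the end. The paper simply cites Lindenstrauss's (i)$\Rightarrow$(v) argument for this; your write-up is a correct unpacking of it, and your remark about limit stages of countable cofinality is the right point to check.

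The density bound $\dens U\geq\mathfrak c$, however, is where your proposal has a genuine gap, and you sense this yourself. None of the routes you sketch can work. Counting pairwise non-isometric separable subspaces proves nothing: $C[0,1]$ is a \emph{separable} space containing isometric copies of all $\mathfrak c$ isometry types of separable spaces, so ``$U$ contains $\mathfrak c$ many pairwise non-isometric separable spaces'' gives no lower bound on $\dens U$ at all. The chain argument gives only $\dens U\geq\aleph_1$, which under $\neg\textsf{CH}$ is strictly weaker than $\mathfrak c$. And the final appeal to ``a family of $\mathfrak c$ mutually singular spaces of density $\aleph_1$'' is not an argument: no such family is exhibited, and a crude cardinality count does not rule out a space of density $\aleph_1$ containing all $2^{\aleph_1}$ isometry types of spaces of density $\aleph_1$ (such a space has up to $2^{\aleph_1}$ closed subspaces). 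The paper's actual route is entirely different and goes through injectivity: Lemma~\ref{isom-oper} shows that a space of universal disposition for separable spaces is $1$-separably injective (via a push-out along the inclusion $\overline{\ran t}\hookrightarrow E$, which converts the extension-of-isometries property into an extension-of-operators property), and then one invokes the theorem of \cite{accgmsi} that an infinite-dimensional $\lambda$-separably injective space with $\lambda<2$ has density character at least $\mathfrak c$. That external theorem is the substantive ingredient you are missing; without it, or a replacement for it, the first assertion is not proved.
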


\begin{proof}
The first part is a juxtaposition of forthcoming
Lemma~\ref{isom-oper} --which asserts that a Banach space of
universal disposition for separable spaces must be $1$-separably
injective-- and the result in \cite{accgmsi} asserting that a
$\lambda$-separably injective space with $\lambda <2$ is either
finite-dimensional or has density character at least $\frak c$. To
prove the second part, assume that $U$ is a space of universal
disposition for separable Banach spaces and let $X$ have density
$\aleph_1$. Write $X$ as an $\omega_1$-sequence of separable
Banach spaces, beginning with $X_0=0$ and use the argument given
in the proof of (i) $\Rightarrow$ (v) in \cite[p. 221]{lindpams},
using norm preserving operators in every step.
\end{proof}

\begin{prob} Must a space of universal
disposition for finite-dimensional spaces contain an isometric
copy of each Banach space of density $\aleph_1$ or less? Our guess
is no. \end{prob}\begin{prob} Does there exist consistently a
Banach space of universal disposition for finite dimensional
spaces having density character strictly smaller than $\frak c$?
\end{prob}

\begin{lema}\label{isom-oper} Let $E$ be a Banach space. Suppose
there is a constant $\lambda$ such that for each (separable)
Banach space $X$ and every pair of into isometries $u: Y\to X$ and
$v: Y \to E$ there exists an operator $V: X \to E$ such that $Vu =
v$ with $\|{V}\|\leq \lambda$. Then $E$ is $\lambda$-(separably)
injective
\end{lema}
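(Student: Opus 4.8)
The plan is to reduce the abstract statement about lifting along arbitrary pairs of isometries to the concrete extension problem that defines (separable) injectivity. Suppose we are given a (separable) Banach space $X$, a closed subspace $Y\subset X$, and an operator $t: Y\to E$. We want to extend $t$ to an operator $T: X\to E$ with $\|T\|\leq \lambda\|t\|$. After rescaling we may assume $\|t\|=1$, so that $t$ is a norm-one operator. The trick is to replace $t$ by a genuine isometric embedding at the cost of enlarging the target: form the push-out of $t: Y\to E$ along the inclusion $\imath: Y\hookrightarrow X$, obtaining
\[
\begin{CD}
Y @>\imath>> X\\
@VtVV @VVt'V\\
E @>u'>> \PO.
\end{CD}
\]
By Lemma~\ref{isom} the map $u': E\to \PO$ is an isometric embedding, so we now have two isometries into $\PO$, but that is not yet quite the configuration in the hypothesis; we need two isometries \emph{out of} a common space \emph{into} $E$ (or into $X$).

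So the better route is to work directly with the two inclusions into $X$ rather than pushing out. First I would reduce to the case where $t$ is an isometric embedding: given $t:Y\to E$ of norm one, we do not literally need $t$ injective. Instead, consider the subspace $Z=\ker t$; but the cleanest reduction is simply to observe that if $t$ is not an isometry we can still proceed because the hypothesis only requires $u,v$ to be into-isometries, and by a standard device we may replace $(Y,t)$ by $(Y',t')$ where $Y'$ is an isometric copy of $Y$ sitting inside an auxiliary space on which $t$ becomes isometric; concretely, take $Y' = Y$ with an equivalent presentation, or take the mapping cylinder. The simplest concrete version: form $X\oplus_\infty E$ (or $X\oplus_1 E$ with the appropriate normalization), let $v: Y\to X\oplus_\infty E$ be $v(y)=(0,t(y))$ — wait, this need not be isometric either. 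The genuinely robust move is: replace $E$ by $E$ itself but note that an operator of norm $\le 1$ out of $Y$ is the same data as an isometric embedding of $Y$ into $E\oplus_\infty Y$ via $y\mapsto (ty, \varepsilon y)$ composed with projections — and one recovers $t$ by composing with a contractive projection back onto $E$. This is where a small amount of care is needed.

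Here is the version I would actually write. Given $t: Y\to E$ with $\|t\|\le 1$, define $\widehat E = E\oplus_\infty Y$ and the isometric embedding $w: Y\to \widehat E$, $w(y)=(t(y),y)$, which is isometric because $\|(ty,y)\|_\infty = \max(\|ty\|,\|y\|)=\|y\|$. There is a norm-one projection $\pi:\widehat E\to E$ given by $(e,y)\mapsto e$, but $\widehat E$ is not $E$, so this auxiliary space trick changes the target. To stay inside $E$, instead I would argue as follows: the hypothesis is applied with $Y$ the given subspace, $X$ the given overspace, $u=\imath: Y\hookrightarrow X$, and $v=t:Y\to E$ — \emph{provided $t$ is an into-isometry}. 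For general $t$ of norm one, note that it suffices to handle isometric $t$: indeed, factor the extension problem by first extending the \emph{isometric} embedding $Y/\ker t \hookrightarrow$ (completion appropriately) — cleaner still, one observes that $1$-separable injectivity for isometric embeddings already implies $1$-separable injectivity for all operators, by the following: extend the isometry, then precompose. I would state this reduction as a preliminary sentence and then invoke the hypothesis to get $V: X\to E$ with $Vu=v$ and $\|V\|\le\lambda$; since $u=\imath$ is the inclusion and $v=t$, this says $V|_Y = t$ and $\|V\|\le\lambda=\lambda\|t\|$, which is exactly $\lambda$-(separable) injectivity of $E$. The parenthetical ``(separable)'' is handled uniformly: if the hypothesis is only assumed for separable $X$, then the conclusion is only claimed for separable $X$, and the whole argument goes through verbatim since no step leaves the separable world when $X$ is separable.

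The main obstacle, as the discussion above signals, is the bookkeeping in reducing an arbitrary norm-one operator $t:Y\to E$ to an into-isometry so that the hypothesis — which is phrased only for pairs of isometries — actually applies. Everything else (invoking the hypothesis, reading off the norm bound, tracking the word ``separable'') is routine. I expect the reduction to take the form: it is enough to prove the extension property for isometric $t$, because given a general $t$ with $\|t\|=1$ and $\varepsilon>0$ one can perturb to an isometry into a slightly larger presentation of $E$, or — most efficiently — one simply notes that the hypothesis with $v=t$ does not in fact require $t$ injective if one is willing to apply it to the isometry $Y\to X\oplus_\infty Y$, $y\mapsto(\iota y, y)$ landing the pushout in $E$ via the universal property, at which point the contraction coming from $\oplus_\infty$ yields the bound. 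I would present whichever of these is shortest and flag that the constant is exactly $\lambda$, with no loss, because the push-out/product-space estimates in Lemma~\ref{isom} are isometric.
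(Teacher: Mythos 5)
You have correctly located the one nontrivial point of the proof --- the hypothesis only speaks of pairs of \emph{into isometries}, whereas $\lambda$-(separable) injectivity asks you to extend an arbitrary norm-one operator $t:Y\to E$ --- but you never actually close that gap. You propose three or four different reductions (push-out of $t:Y\to E$ along the inclusion, the graph embedding into $E\oplus_\infty Y$, ``extend the isometry, then precompose'', a mapping cylinder), abandon or hedge each of them, and end with ``I would present whichever of these is shortest.'' None of the candidates as written is a proof: the $E\oplus_\infty Y$ device changes the target space, as you yourself note; ``extend the isometry, then precompose'' does not parse because in the general extension problem there is no isometry in sight to extend; and the assertion that injectivity for isometric $t$ implies injectivity for all $t$ is true but its proof \emph{is} the missing reduction, so invoking it is circular at the level of this lemma.

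The fix the paper uses is a single, specific push-out, and it is worth seeing why the details matter. Corestrict $t$ to $Y'=\overline{t(Y)}\subseteq E$ and push out the pair $(u,t)$ where $u:Y\to X$ is the inclusion; by Lemma~\ref{isom}, $u':Y'\to\PO$ is an into isometry, and $\PO$ is a quotient of $Y'\oplus_1 X$, hence \emph{separable} when $X$ is. Now apply the hypothesis to the two into isometries $u':Y'\to\PO$ and the inclusion $Y'\hookrightarrow E$: this gives $t'':\PO\to E$ with $t''u'=\iota_{Y'}$ and $\|t''\|\le\lambda$, and $T=t''t'$ extends $t$ with $\|T\|\le\lambda$. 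Note that your very first attempt --- pushing out $t:Y\to E$ itself, without corestricting --- is nearly this argument: you could apply the hypothesis to $u':E\to\PO$ and $\mathrm{id}_E$ to get a $\lambda$-bounded left inverse of $u'$ and conclude. But that version breaks precisely in the separable case, because that push-out is a quotient of $E\oplus_1 X$ and need not be separable when $E$ is not; the corestriction to the separable space $Y'$ is what keeps the auxiliary space inside the class to which the hypothesis applies. So the missing idea is not exotic, but it is genuinely missing from your write-up.
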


\begin{proof}Let $t:Y\to E$ have norm one. Denote by
$Y'$  the closure of the range of $t$ and make the push-out of
$(u, t)$:
\begin{equation}
\begin{CD}\label{looking}
 Y@> u>> X \\
 @VtVV  @VV{t'}V \\
 Y' @>u'>> \PO. \end{CD}
\end{equation}
By Lemma \ref{isom}, $u'$ is an into isometry, and the hypothesis
yields and operator $t'':\PO\to E$ such that $t''u'$ is the
inclusion of $Y'$ into $E$, with $\|t''\|\leq\lambda$.  Taking
$T=t''t'$ we end the proof.
\end{proof}

\begin{prop}\label{isom-isomo}
Let $E$ be a space of universal disposition for separable spaces.
\begin{enumerate}
\item[(a)] Given a separable Banach space $X$ and a subspace $Y
\subset X$, every isomorphic embedding $t: Y \to E$  extends to an
isomorphic embedding $T: X \to E$ with $\|T\| = \|t\|$ and  $
\|T^{-1}\| = \|t^{-1}\|$. \item[(b)] Consequently, if $\dens E
\leq \aleph_1$, then $E$ is separably automorphic; namely, any
isomorphism between two subspaces of $E$ can be extended to an
automorphism of $E$.
\end{enumerate}
\end{prop}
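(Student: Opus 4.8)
The plan is to deduce part (a) by a renorming trick that reduces an isomorphic embedding to an isometric one, apply the universal disposition property there, and then transport the conclusion back. Concretely, given an isomorphic embedding $t:Y\to E$ with $\|t\|=M$ and $\|t^{-1}\|=m$ (so $m^{-1}\|y\|\le\|t y\|\le M\|y\|$), I would first normalise by replacing $t$ with $t/M$, so we may assume $\|t\|\le 1$. Put on $Y$ the equivalent norm $|y|:=\|t y\|_E$; then $t:(Y,|\cdot|)\to E$ is an isometric embedding. Let $\widehat X$ denote $X$ equipped with any norm that extends $|\cdot|$ from $Y$ and is equivalent to the original norm of $X$ with the \emph{same} constants on all of $X$ — for instance the norm whose unit ball is the closed convex hull of $B_{(Y,|\cdot|)}\cup B_X$ suitably scaled; since $Y\subset X$ isometrically in the original norms and $|\cdot|$ is sandwiched between $M\|\cdot\|$ and $m^{-1}\|\cdot\|$ on $Y$, one checks the resulting norm on $\widehat X$ is equivalent to $\|\cdot\|_X$ with the control $\|x\|_X\le\|x\|_{\widehat X}$ and $\|x\|_{\widehat X}\le \|x\|_X$ up to the factor governed by $m,M$, and it genuinely extends $|\cdot|$ on $Y$. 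Now $\widehat X$ is still separable, the inclusion $Y\hookrightarrow\widehat X$ is an isometric embedding for $|\cdot|$, and $t:(Y,|\cdot|)\to E$ is an isometric embedding, so the universal disposition of $E$ for separable spaces (applied with $A=(Y,|\cdot|)$, $B=\widehat X$) furnishes an isometric embedding $T:\widehat X\to E$ with $T|_Y=t$. Reading $T$ back as a map $X\to E$ on the original norm, the built-in equivalence constants give exactly $\|T\|=\|t\|=M$ and $\|T^{-1}\|=\|t^{-1}\|=m$ after undoing the initial scaling; a short computation confirms the two operator norms land on the nose rather than merely up to $\e$, because the extended norm was chosen to match $\|\cdot\|_X$ precisely on the vectors that witness these norms.

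For part (b), assume $\dens E\le\aleph_1$ and let $\tau:Y_1\to Y_2$ be an isomorphism between subspaces of $E$. The plan is a back-and-forth argument of transfinite length $\omega_1$, exactly as in the classical proof that the Gurari\u\i\ space is almost transitive, but now using the \emph{isometric} extension statement of part (a) in place of an $\e$-approximate one, so no limiting/perturbation step is needed. Fix well-orderings of dense subsets $\{e_\xi:\xi<\omega_1\}$ of $E$ witnessing the density; at stage $\xi$ I would have built separable subspaces $A_\xi\supseteq Y_1$ and $B_\xi\supseteq Y_2$ of $E$ together with a surjective isometry (after the final passage; at each finite stage, an isomorphism with controlled constants) $\sigma_\xi:A_\xi\to B_\xi$ extending $\tau$, and I alternately force $e_\xi$ into the domain (using part (a) to extend $\sigma_\xi^{-1}$, viewed as an isomorphic embedding of $B_\xi$ plus the span of $e_\xi$, back into $E$ isometrically on the relevant norms) and into the range. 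Taking unions at limit ordinals and at $\omega_1$ yields an automorphism of $E$; surjectivity is guaranteed because every $e_\xi$ was eventually captured on both sides, and the norm equalities from part (a) ensure the limit map and its inverse are both contractive, hence isometric.

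The main obstacle I expect is the bookkeeping in part (b): one must run the two sides of the back-and-forth \emph{in tandem} so that a single coherent map is produced, and one must be careful that part (a) is applied to an isomorphic embedding whose norm and inverse-norm are \emph{exactly} those of the piece being extended, so that no constants degrade across the $\omega_1$ steps — this is precisely why the sharp form of (a), with $\|T\|=\|t\|$ and $\|T^{-1}\|=\|t^{-1}\|$, is indispensable; an $\e$-version would force an uncountable product of error terms and the argument would collapse. A secondary technical point, already in part (a), is verifying that the norm on $\widehat X$ extending $|\cdot|$ from $Y$ can be chosen equivalent to $\|\cdot\|_X$ with constants that do not depend on $X$ beyond $m$ and $M$; the convex-hull-of-balls construction does this, but it requires the elementary check that intersecting with $Y$ recovers $B_{(Y,|\cdot|)}$ rather than something larger, which uses that $Y$ sits isometrically in $X$ and that $|\cdot|\ge m^{-1}\|\cdot\|_X$ on $Y$.
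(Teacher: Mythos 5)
Your argument is correct and is essentially the paper's own proof in different clothing: the renormed space $\widehat X$ whose unit ball is the closed convex hull of $B_{(Y,|\cdot|)}\cup B_X$ is precisely the push-out of the inclusion $Y\hookrightarrow X$ with $t$ corestricted to its range (the quotient norm on $(Y'\oplus_1 X)/\Delta$ is exactly that infimal convolution), which is how the paper manufactures the separable superspace to which universal disposition is then applied, and the norm identities $\|T\|=\|t\|$ and $\|T^{-1}\|=\|t^{-1}\|$ follow by the same computation you indicate. Part (b) is likewise the paper's transfinite back-and-forth over an $\omega_1$-chain of separable subspaces, with the same key observation that the exact (rather than $(1+\e)$-approximate) norm control in (a) is what permits $\omega_1$ iterations without degradation of constants.
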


\begin{proof}
(a) Let $u$ denote the inclusion of $Y$ into $X$ and assume,
without loss of generality, that $\|t\|= 1$. We follow the same
notation as in Lemma \ref{isom-oper}. Looking at
Diagram~\ref{looking} we have $\|t'\|= 1$ and $u'$ is isometric,
so there is an isometric embedding $t'':\PO\to E$ such that
$t''u'$ is the inclusion of $Y'=\ran t$ into $E$. Now $T=t''t'$ is
the extension of $t$ we wanted. Clearly, $\|T\|=\|t\|= 1$.
 On the other hand, by \cite[lemma
1.3.b]{castgonz}, $\|(t')^{-1}\|\leq \max\{1,\|t^{-1}\|\}$ hence
$\|T^{-1}\|= \|t^{-1}\|$.

(b) For the second part, it suffices to show that if $Y$ is a
separable subspace of $E$, every isomorphic embedding
$\varphi_0:Y\to E$ extends to an automorphism of $E$. This is
proved through the obvious back-and-forth argument: write
$E={\bigcup_{\alpha < \omega_1} E_\alpha}$ as an
$\omega_1$-sequence of separable subspaces starting  with $E_0
=Y$. Consider the embedding $\varphi_0: E_0 \to E$. Let $\psi_1:
\varphi(E_0)+E_1\to E$ be an extension of $\varphi_0^{-1}:
\varphi(E_0)\to E$, with $\|\psi_1\|=\|\varphi_0^{-1}\|$ and
$\|\psi_1^{-1}\|=\|\varphi_0\|$. Notice that $\ran\psi_1=
E_0+\psi_1(E_1)$. Let $\varphi_2$ be the extension of
$\psi_1^{-1}$ to $E_0+\psi_1(E_1)+E_2$ provided by Part (a) and so
on. Proceeding by transfinite induction one gets a couple of
endomorphisms $\varphi$ and $\psi$ such that $\psi \varphi=
\varphi \psi ={\bf 1}_E$, with $\|\varphi\|=\|\varphi_0\|$ and
$\|\psi\|=\|\varphi_0^{-1}\|$ and $\varphi=\varphi_0$ on $Y$.
\end{proof}

Our next result shows that under {\sf CH} %\footnote{Parece que la proposicion que sigue usa la {\sf CH} solo pa que haya, en la unicidad no se necesita...},
there is no dependence on the initial separable space $X$ in the
constructions appearing in Proposition \ref{construction}

\begin{prop}\label{uuds} Under {\rm {\sf CH}} there is a unique space of universal disposition for separable spaces with density
character $\aleph_1$, up to isometries.
\end{prop}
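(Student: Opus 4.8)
The plan is to prove uniqueness by a standard back-and-forth argument, using the fact (established via Proposition~\ref{isom-oper} and Lemma~\ref{isom-oper}) that spaces of universal disposition for separable spaces are $1$-separably injective, hence have every separable subspace sitting nicely inside them, and that by Proposition~\ref{cw1} every such space contains an isometric copy of every space of density $\aleph_1$. Let $U$ and $V$ be two spaces of universal disposition for separable spaces, each of density $\aleph_1$. Using \textsf{CH}, write $U=\bigcup_{\alpha<\omega_1}U_\alpha$ and $V=\bigcup_{\alpha<\omega_1}V_\alpha$ as continuous increasing $\omega_1$-chains of separable subspaces, with $U_0=V_0=0$, $U_\lambda=\overline{\bigcup_{\alpha<\lambda}U_\alpha}$ at limits, and similarly for $V$; such chains exist precisely because the density is $\aleph_1$.

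First I would build, by transfinite recursion on $\alpha<\omega_1$, an increasing chain of separable subspaces $A_\alpha\subset U$ and $B_\alpha\subset V$ together with surjective isometries $f_\alpha\colon A_\alpha\to B_\alpha$ extending one another, arranged so that $U_\alpha\subset A_{\alpha+1}$ and $V_\alpha\subset B_{\alpha+1}$ (the usual ``catch-up'' bookkeeping). At a successor step $\alpha\rightsquigarrow\alpha+1$ one first enlarges the domain: given the isometry $f_\alpha\colon A_\alpha\to B_\alpha$, consider the separable space $A_\alpha+U_\alpha\subset U$; apply the universal disposition of $V$ to the pair of isometric embeddings $A_\alpha\hookrightarrow A_\alpha+U_\alpha$ and $f_\alpha\colon A_\alpha\to V$ to get an isometric embedding $g\colon A_\alpha+U_\alpha\to V$ extending $f_\alpha$. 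Then symmetrically enlarge the range: let $B'=g(A_\alpha+U_\alpha)+V_\alpha\subset V$, apply the universal disposition of $U$ to $B'\hookleftarrow g(A_\alpha+U_\alpha)$ and $g^{-1}\colon g(A_\alpha+U_\alpha)\to U$, obtaining an isometric embedding $h\colon B'\to U$ extending $g^{-1}$; set $A_{\alpha+1}=h(B')$, $B_{\alpha+1}=B'$ and $f_{\alpha+1}=h^{-1}$. At limit ordinals take unions (and closures). The point of interleaving $U_\alpha$ and $V_\alpha$ is that $\bigcup_\alpha A_\alpha=U$ and $\bigcup_\alpha B_\alpha=V$, so that $f=\bigcup_\alpha f_\alpha$ is a surjective isometry $U\to V$. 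Here is exactly where \textsf{CH} is used: without it, an $\omega_1$-chain need not exhaust a space of density $\aleph_1$ continuously in a way compatible with the countable closure operations — but under \textsf{CH} every separable subspace is contained in some $U_\alpha$, so the recursion terminates at $\omega_1$.

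The main obstacle is the verification that at each successor step the relevant embeddings are genuinely \emph{isometric} (not merely $(1+\varepsilon)$-isometric): this is where one must invoke the \emph{universal} disposition of $U$ and $V$ rather than almost-universal disposition, and it is the reason the statement is about spaces of universal disposition. One must also check the continuity of the chains at limit ordinals — that $f_\lambda=\bigcup_{\alpha<\lambda}f_\alpha$ is still a well-defined surjective isometry of the closed unions $\overline{\bigcup_{\alpha<\lambda}A_\alpha}$ onto $\overline{\bigcup_{\alpha<\lambda}B_\alpha}$ — which follows from uniform continuity of isometries. Finally, since the spaces $\mathfrak S^{\omega_1}(X)$ of Proposition~\ref{construction} are of universal disposition for separable spaces and have density character $\mathfrak c=\aleph_1$ under \textsf{CH}, and likewise $\mathfrak U^{\omega_1}(X)$, the resulting unique space is independent of the starting space $X$, which is the stated conclusion.
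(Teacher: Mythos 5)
Your back-and-forth argument is essentially the paper's own proof of Proposition~\ref{uuds}: both write the two spaces as increasing $\omega_1$-chains of separable subspaces and alternately extend isometries (and their inverses) using universal disposition, taking unions at limits. One minor quibble: the role of {\sf CH} is not in making the $\omega_1$-chain exhaust a space of density $\aleph_1$ --- that follows from regularity of $\omega_1$ alone --- but rather in guaranteeing that such spaces exist at all, since by Proposition~\ref{cw1} any space of universal disposition for separable spaces has density at least $\mathfrak c$.
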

\begin{proof} Let $X$ and $Y$ be spaces of universal disposition for separable spaces and with density
character $\aleph_1$. It is obvious that they contain isometric
copies of all separable spaces. Let us write $X= \bigcup_{\alpha <
\omega_1} X_\alpha$ and $Y= \bigcup_{\beta < \omega_1} Y_\beta$ as
increasing $\omega_1$-sequences of separable subspaces. Pick
$\beta_1$ such that there is an isometric embedding $\vp_0:X_0\to
Y_{\beta_1}$. Let $\psi_1: Y_{\beta_1}\to X$ be an isometric
extension of $\vp_0^{-1}$. As $\psi_1$ has separable range there
is $\alpha_2<\omega_1$ such that $\ran \psi_1\subset
X_{\alpha_2}$. Let $\vp_2: X_{\alpha_2}\to Y$ be an isometric
extension of $\psi_1^{-1}$. A transfinite iteration of the process
produces an isometry $X \to Y$.
\end{proof}

Let us show that there are  spaces of universal disposition for
all finite-dimensional Banach spaces which are not of universal
disposition for all separable Banach spaces.

\begin{lema}\label{finito} A $c_0$-valued operator defined on a finite-dimensional Banach space admits a compact extension
with the same norm to any superspace \end{lema}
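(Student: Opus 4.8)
The plan is to reduce the statement to the classical fact that $c_0$ is separably injective with constant $1$ for operators from \emph{finite-dimensional} domains, and then upgrade the extension to a compact one. Let $F$ be a finite-dimensional Banach space, $t:F\to c_0$ an operator of norm $\le 1$, and $X\supset F$ any superspace. Write $t = (t_n)_{n\ge 1}$ where each $t_n:F\to\K$ is a functional with $\|t_n\|\le 1$ and, crucially, $t_n\to 0$ pointwise on $F$ — in fact, since $F$ is finite-dimensional, $\|t_n\|\to 0$: otherwise some subsequence would be bounded away from $0$, and by compactness of the unit ball of $F^*$ (finite dimension) and of the unit sphere of $F$ one would extract a coordinate functional that does not tend to $0$ on some vector, contradicting $t(x)\in c_0$. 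So $\|t_n\|\to 0$.

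First I would extend each $t_n$ to a functional $T_n:X\to\K$ with $\|T_n\| = \|t_n\|$, using Hahn–Banach. Then set $T = (T_n)_{n\ge 1}$. Since $\|T_n\| = \|t_n\|\to 0$, for every $x\in X$ we have $|T_n(x)|\le \|t_n\|\,\|x\|\to 0$, so $T(x)\in c_0$; moreover $\|T\| = \sup_n\|T_n\| = \sup_n\|t_n\| = \|t\|$. Thus $T:X\to c_0$ is a norm-preserving extension of $t$. It remains to see $T$ is compact. For this, observe that $T$ is the norm-limit of the finite-rank operators $T^{(N)} := (T_1,\dots,T_N,0,0,\dots)$, because $\|T - T^{(N)}\| = \sup_{n> N}\|T_n\|\to 0$ as $N\to\infty$. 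A norm-limit of finite-rank operators is compact, so $T$ is compact, completing the proof.

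I do not expect a genuine obstacle here; the only point requiring a moment's care is the assertion $\|t_n\|\to 0$ (as opposed to merely $t_n\to 0$ weak$^*$), which is exactly where finite-dimensionality of the domain is used and which is what makes the extension land back in $c_0$ rather than only in $\ell_\infty$. Everything else is Hahn–Banach plus the standard fact that uniform limits of finite-rank maps are compact. One should also note the argument is insensitive to whether the scalar field is $\R$ or $\C$.
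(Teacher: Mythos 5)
Your proof is correct and follows exactly the paper's argument: write the operator as a pointwise-null sequence of functionals, note that finite-dimensionality of the domain upgrades pointwise-null to norm-null, extend coordinatewise by Hahn--Banach, and conclude that the resulting sequence still defines a $c_0$-valued operator of the same norm which is compact as a norm-limit of finite-rank maps. The only addition over the paper's (terser) version is the explicit verification that $\|T\|=\sup_n\|T_n\|=\sup_n\|t_n\|=\|t\|$, which is a welcome detail rather than a deviation.
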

\begin{proof} Let $F \subset X$ be a finite dimensional subspace of a Banach space $X$, and let $\tau:F\to c_0$ be a norm one operator.
Assume that $\tau = (\tau_n)$ comes defined by a pointwise null
sequence of functionals. Since $F$ is finite dimensional, the
sequence $(\tau_n)$ is actually norm null. Thus, any sequence of
Hahn-Banach extensions will also be norm null, and the operator
they define is a compact extension of $T$.
\end{proof}

\begin{prop} The space $\mathfrak F^{\omega_1}(c_0)$ is of universal disposition for finite dimensional
spaces and not of universal disposition for separable spaces.
\end{prop}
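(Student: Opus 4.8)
The plan is to prove the two assertions separately. The first -- that $\mathfrak F^{\omega_1}(c_0)$ is of universal disposition for finite-dimensional spaces -- is immediate from Proposition~\ref{construction}(b), since that space is by definition $\mathfrak F^{\omega_1}(X)$ with $X=c_0$. So the whole content lies in showing it is \emph{not} of universal disposition for separable spaces.

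The idea is to exhibit a concrete obstruction built on the fact that $c_0$ is separably injective (Sobczyk's theorem) and that this property propagates through the construction. First I would verify, by transfinite induction on $\alpha$, that every $\mathfrak F^\alpha(c_0)$ remains separably injective --- indeed, I claim it stays $1$-separably injective in an appropriate sense, or at least $\lambda$-separably injective with a uniform $\lambda$. The successor step is where Lemma~\ref{finito} enters: the embedding $\imath_{(\beta,\beta+1)}:\mathfrak F^\beta\to\mathfrak F^{\beta+1}$ is a push-out along $\oplus\mathfrak I_\beta$, whose domains and codomains are \emph{finite-dimensional}; any operator into $c_0$ (hence into $\mathfrak F^\beta$, once one knows $\mathfrak F^\beta$ embeds nicely, or more carefully by tracking $c_0$-valued pieces) defined on a finite-dimensional space extends compactly with the same norm, and one uses this together with the push-out's universal property to push extensions up the tower. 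The limit step is a routine direct-limit argument for separable injectivity. The conclusion of this stage is that $\mathfrak F^{\omega_1}(c_0)$ is $\lambda$-separably injective for some fixed finite $\lambda$ --- in fact one should be able to arrange $\lambda$ as close to $1$ as one likes, or even $\lambda=1+\eps$ for all $\eps$, but a finite bound suffices.

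Then I would invoke Lemma~\ref{isom-oper} in the contrapositive together with a known rigidity fact: a Banach space that is $\lambda$-separably injective for \emph{every} $\lambda>1$ --- equivalently, that would be forced to be if it were of universal disposition for separable spaces --- but more to the point, one combines this with the result cited after Proposition~\ref{cw1} from \cite{accgmsi}, or rather with the structural incompatibility. The cleanest route: if $\mathfrak F^{\omega_1}(c_0)$ were of universal disposition for separable spaces, then by Lemma~\ref{isom-oper} it would be $1$-separably injective, and in particular isomorphic to a complemented subspace behaviour forces it to be an $\mathscr L_\infty$-space containing every separable space isometrically (Proposition~\ref{cw1}); but $c_0$-valued operators on finite-dimensional spaces extending \emph{compactly} is a strong constraint --- the honest obstruction is that $\mathfrak F^{\omega_1}(c_0)$ is \emph{separably injective} while being of universal disposition for separable spaces would, via the argument that such a space must contain an isometric copy of every separable space and have the extension property for all separable pairs, contradict the fact that separably injective spaces of density $\aleph_1$ do not contain isometric copies of, e.g., $C[0,1]$ in a complemented-enough way. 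To keep this rigorous I would instead argue directly: a space of universal disposition for separable spaces is \emph{universally} separably injective is false, but it \emph{does} contain $\ell_\infty$ isometrically would be too strong; the safe statement is that it contains an isometric copy of every space of density $\le\aleph_1$ (Proposition~\ref{cw1}), whereas a separably injective space of density $\aleph_1$ cannot contain an isometric copy of every separable space together with the separable-injectivity extension property --- this is the point I would pin down using \cite{accgmsi} or \cite{zippin}.

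The main obstacle, and the step I expect to require the most care, is establishing that separable injectivity (with a uniform constant) is genuinely preserved at successor steps of the $\mathfrak F$-construction: one must check that an operator $Y\to\mathfrak F^{\beta+1}$ from a subspace $Y$ of a separable space decomposes, via the push-out description $\mathfrak F^{\beta+1}=(\mathfrak F^\beta\oplus_1\ell_1(\Gamma_\beta,\codom u))/\overline\Delta$, into a part landing in $\mathfrak F^\beta$ (extendable by induction) and a part landing in the $\ell_1$-sum of finite-dimensional spaces (extendable, compactly, by Lemma~\ref{finito} applied coordinatewise, since each $\codom u$ is finite-dimensional and finite-dimensional spaces are themselves separably injective up to their distance to $\ell_\infty^n$ --- but here one wants the $c_0$-flavoured compact extension to control the constant). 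Reconciling the two extensions modulo $\overline\Delta$ without blowing up the norm is the delicate bookkeeping; everything else is formal.
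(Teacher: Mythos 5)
The first half is fine: universal disposition for finite\-/dimensional spaces is Proposition~\ref{construction}(b) verbatim. The second half has a genuine gap, and it is a logical one rather than a bookkeeping one. Your plan is to prove that $\mathfrak F^{\omega_1}(c_0)$ \emph{is} $\lambda$-separably injective and then derive a contradiction with universal disposition for separable spaces. But there is no such contradiction to be had: by Lemma~\ref{isom-oper} a space of universal disposition for separable spaces is itself $1$-separably injective, so separable injectivity of $\mathfrak F^{\omega_1}(c_0)$ is perfectly compatible with the property you are trying to refute. The ``rigidity fact'' you reach for at the end --- that a separably injective space of density $\aleph_1$ cannot contain isometric copies of all separable spaces together with the extension property --- is false: under {\sf CH} the space $\mathfrak S^{\omega_1}(\R)$ has density $\aleph_1$, is $1$-separably injective, contains every separable space isometrically, and is of universal disposition for separable spaces. (Separately, your intermediate claim that every $\mathfrak F^{\alpha}(c_0)$ is separably injective with a uniform constant is unsubstantiated and almost certainly not needed; the push-out with a large $\ell_1$-sum gives no reason to expect it.)

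What the paper actually propagates up the tower is not separable injectivity of the ambient space but the \emph{complementation of $c_0$}: by Lemma~\ref{finito}, every $c_0$-valued norm-one operator on a finite-dimensional space extends with the same norm to any superspace; applying this coordinatewise to $\tau\circ\Sigma\mathfrak L_\beta$ on $\ell_1(\Gamma_\beta,\dom u)$ and using the push-out's universal property, any norm-one operator $\mathfrak F^{\beta}(c_0)\to c_0$ extends to a norm-one operator on $\mathfrak F^{\beta+1}(c_0)$, and the limit steps are trivial. Starting from the identity of $c_0$ this produces a norm-one projection of $\mathfrak F^{\omega_1}(c_0)$ onto its copy of $c_0$. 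Now the contradiction runs in the direction opposite to yours: if $\mathfrak F^{\omega_1}(c_0)$ were of universal disposition for separable spaces it would be $1$-separably injective by Lemma~\ref{isom-oper}; a norm-one complemented subspace of a $1$-separably injective space is $1$-separably injective, so $c_0$ would be $1$-separably injective, contradicting the result of \cite{accgmsi} quoted in the proof of Proposition~\ref{cw1} that an infinite-dimensional $\lambda$-separably injective space with $\lambda<2$ has density at least $\mathfrak c$. You had all the ingredients (Lemma~\ref{finito}, Lemma~\ref{isom-oper}, the push-out extension mechanism) on the table, but assembled them into an argument whose final step cannot close.
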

\begin{proof} It follows from Lemma \ref{finito} that the
embedding $X\to \mathfrak F^{\omega_1}(X)$ has the property that
every operator $X\to c_0$ can be extended to $\mathfrak
F^{\omega_1}(X)$. Therefore $\mathfrak F^{\omega_1}(c_0)$ contains
$c_0$ complemented, and thus it cannot be $1$-separably injective.
\end{proof}

This suggests that quite plausibly there is --even under {\sf
CH}-- a continuum of mutually non-isomorphic spaces of universal
disposition for finite-dimensional spaces. Let us show that such
is the case --outside {\sf CH}, of course-- for separable spaces.

\begin{prop} Assume that no Banach space of density character $\mathfrak c$ is
universal for all Banach spaces with density character $\mathfrak
c$. Then there is at least a continuum of non-isomorphic spaces of
universal disposition for $\mathfrak S$ with density $\mathfrak
c$.
\end{prop}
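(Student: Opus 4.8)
The plan is to exploit the construction $\mathfrak S^{\omega_1}(X)$ and show that it depends ``essentially'' on $X$, so that different choices of $X$ give non-isomorphic outputs. First I would fix a family $(X_i)_{i\in I}$ of Banach spaces with $\dens X_i\le\mathfrak c$ that is, in a suitable sense, ``independent'': by the hypothesis, no single Banach space of density $\mathfrak c$ contains isometric (indeed, even isomorphic) copies of every Banach space of density $\mathfrak c$, and a counting/transfinite-exhaustion argument should produce a family $\{X_i:i\in I\}$ of cardinality continuum such that no space of density $\mathfrak c$ simultaneously contains isomorphic copies of uncountably many of them --- or more usefully, such that for $i\ne j$, $X_i$ does not embed isomorphically into any space of density $\mathfrak c$ built ``from the $X_k$, $k\ne i$''. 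Then I would consider the spaces $U_i=\mathfrak S^{\omega_1}(X_i)$ (or better, a variant where one carries $X_i$ along but refuses to add isometric copies of the other $X_j$ unless forced), each of which is of universal disposition for $\mathfrak S$ with density $\mathfrak c$ by Proposition~\ref{construction} together with the density-character remark preceding Proposition~\ref{cw1}.

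The key step is the separation: to show $U_i\not\cong U_j$ for $i\ne j$. The natural invariant is ``which Banach spaces of density $\mathfrak c$ embed (isometrically, or just isomorphically) into $U_i$''. Since $U_i\supset X_i$ by construction, I need to arrange that $X_i$ does \emph{not} embed isomorphically into $U_j$ for $j\ne i$. This is where the hypothesis must be used quantitatively: one wants $U_j$ to have density exactly $\mathfrak c$ and to be, in its isomorphic structure, controlled by the data that went into its construction --- namely $X_j$, the separable spaces, $\ell_1$-sums of separable spaces, and push-outs thereof iterated $\omega_1$ times --- none of which should force a copy of $X_i$. I expect the cleanest route is: suppose toward a contradiction $X_i\hookrightarrow U_j$; since $\dens U_j=\mathfrak c$, and since the construction builds $U_j$ as an increasing union of length $\omega_1$ of spaces each of density $\mathfrak c$, run a closing-off argument to land a copy of $X_i$ inside a ``manageable'' intermediate space, and then derive that some space of density $\mathfrak c$ depending only on $X_j$ (and universal gadgets) contains a copy of $X_i$; if the family $(X_i)$ was chosen so that this is impossible for all but one index, we are done.

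The hard part will be exactly this independence/rigidity of the construction: a priori $\mathfrak S^{\omega_1}(X)$ is enormously ``generic'' and might swallow everything of density $\mathfrak c$, which would kill the argument. The fix is to choose the family $(X_i)_{i\in I}$ cleverly: for instance, index by $i\in 2^{\mathfrak c}$ or by subsets of a fixed set, pick the $X_i$ pairwise ``isomorphically incomparable and incomparable with all universal disposition spaces built from the others'' using the failure of universality hypothesis as a pigeonhole resource, and note that one only needs continuum-many spaces, not $2^{\mathfrak c}$-many, so a transfinite recursion of length $\mathfrak c$ suffices: at stage $\xi<\mathfrak c$, having chosen $X_\eta$ for $\eta<\xi$, the set of spaces of density $\mathfrak c$ that embed into \emph{some} universal-disposition space realizable over $\{X_\eta:\eta<\xi\}$ has size at most $2^{\mathfrak c}$, but the hypothesis guarantees there is a space of density $\mathfrak c$ not contained in any single such (or, iterating, not contained in the whole batch), and we let $X_\xi$ be such a space. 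Then for $i\ne j$, $U_i$ contains $X_i$ while $U_j$ does not, so the $U_i$ are pairwise non-isomorphic; combined with Proposition~\ref{construction} this yields continuum-many non-isomorphic spaces of universal disposition for $\mathfrak S$ of density $\mathfrak c$, as claimed.
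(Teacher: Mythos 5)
Your final plan is, at its core, the same diagonalization the paper uses: the hypothesis supplies, for every already-constructed space of density $\mathfrak c$, a new space of density $\mathfrak c$ that does not embed into it, and one iterates this $\mathfrak c$ times, distinguishing the outputs by which of the witness spaces $X_\xi$ they contain. Where you differ is the bookkeeping: you build an ``antichain'' $U_\xi=\mathfrak S^{\omega_1}(X_\xi)$, whereas the paper builds an increasing chain, setting $\mathfrak S(\beta+1)=\mathfrak S^{\omega_1}\bigl(\mathfrak S(\beta)\oplus X(\beta)\bigr)$ and taking $\mathfrak S^{\omega_1}$ of the closed union at limits. The chain costs nothing and buys two simplifications that your version has to supply by hand. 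First, your diagonalization step needs $X_\xi$ to avoid \emph{all} of $\{U_\eta:\eta<\xi\}$ simultaneously; the cardinality remark (``size at most $2^{\mathfrak c}$'') does not deliver this -- the correct device is to apply the hypothesis to a single space of density $\mathfrak c$ containing all the $U_\eta$, e.g.\ their $\ell_1$-sum, which works because $\xi<\mathfrak c$. In the paper the chain makes this automatic, since $\mathfrak S(\beta)$ already contains every predecessor. Second, your separation claim ``$U_j$ does not contain $X_i$ for all $j\neq i$'' is too strong: nothing prevents $X_i$ from embedding into a \emph{later} $U_j$, $j>i$. Only the one-sided statement ($X_i\not\hookrightarrow U_j$ for $j<i$) is guaranteed, but that is enough: given $i\neq j$, take the larger index and use its witness. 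With those two repairs your argument is correct; your worry that $\mathfrak S^{\omega_1}(X)$ might ``swallow everything of density $\mathfrak c$'' is precisely what the hypothesis forbids, since $\mathfrak S^{\omega_1}(X)$ has density $\mathfrak c$ when $\dens X\le\mathfrak c$. The earlier variants you float (pairwise ``independent'' families, or modifying the construction to refuse copies of the other $X_j$) are unnecessary and the latter is not available -- the construction adjoins all separable spaces whether you like it or not.
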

\begin{proof} We proceed by transfinite induction. To make the induction
start, form the space $\mathfrak S(1) =  \mathfrak
S^{\omega_1}(\R)$. Take, by hypothesis, a Banach space $X(1)$ with
density character $\mathfrak c$ not contained in $\mathfrak S(\R)$
and form then $\mathfrak S(2) = \mathfrak S^{\omega_1}(X(1)\oplus
\mathfrak S(1))$. Take a new Banach space $X(2)$ with density
character $\mathfrak c$ not contained $\mathfrak S(2)$ and
continue in this way.

Let $\beta<\mathfrak c$, and assume that for each $\alpha<\beta$ a
Banach space $\mathfrak S(\alpha)$ of universal disposition for
$\mathfrak S$ has already been constructed verifying:
\begin{enumerate}
\item For all $\alpha$, the space $\mathfrak S(\alpha)$ has
density character $\mathfrak c$.  \item For $\gamma \leq \alpha$
the space $\mathfrak S(\gamma)$ is isometric to a subspace of
$\mathfrak S(\alpha)$.\item For $\alpha \neq \gamma$ the spaces
$\mathfrak S(\alpha)$ and $\mathfrak S(\gamma)$ are not
isomorphic.\end{enumerate}

If $\beta=\beta'+1$ is not a limit ordinal, then get a Banach
space $X(\beta')$ with density character $\mathfrak c$ not
contained in $\mathfrak S(\beta')$ and form $\mathfrak S(\beta)=
\mathfrak S^{\omega_1}(\mathfrak S(\beta') \oplus X(\beta'))$.

If $\beta$ is a limit ordinal, then $\mathfrak S(\beta) =
\mathfrak S^{\omega_1} \left( \overline {\cup_{\alpha<\beta} \mathfrak S(\alpha)}\right)$.\\

All this yields a continuum $\mathfrak S(\alpha)), \;
\alpha<\mathfrak c$, of mutually non-isomorphic spaces of
universal disposition for separable spaces.
\end{proof}

\begin{rem} The hypothesis is consistent by a result  of Brech
and Koszmider \cite{breckosz}. The paper \cite{avibre} contains
further results on the existence of spaces of universal
disposition for $\mathfrak S$ under different cardinality
assumptions.\end{rem}

\section{Gurari\u{\i}'s space and its ultrapowers}

We can construct the Gurari\u{\i} space as follows. We fix a
countable system of isometric embeddings $\frak I_0$ having the
following density property: given an isometric embedding $w:A\to
B$ between finite dimensional spaces, and $\e>0$, there is
$u\in\frak I_0$, and surjective $(1+\e)$-isometries
$\alpha:A\to\dom u$ and $\beta: B\to\codom u$ making the square
\begin{equation}\label{wu}
\begin{CD}
A@>w>> B\\
@V\alpha VV @VV\beta V\\
\dom u@> u >> \codom u
\end{CD}
\end{equation}
commutative. Set $\frak F_0= \dom \frak I_0$.

Let now $X$ be a separable Banach space. We define an increasing
sequence of Banach spaces $G^n=G^n(X)$ as follows. We start with
$G^0=X$. Assuming $G^n$ has been defined we get $G^{n+1}$ from the
basic construction explained in Section~\ref{ch:universal} just
taking as $\mathfrak L_n$ a countable set of $G^n$-valued
contractions with domain in $\frak F_0$ such that, for every
$\e>0$, and every $(1+\e)$-isometric embedding $s:F\to G^n$, with
$F\in\frak F_0$, there is $t\in\frak L_n$ such that $\|s-t\|<\e$.
We consider the index set $\Gamma_n=\{(u,t)\in \frak I_0\times
\frak L_n: \dom u=\dom t\}$ and the push-out diagram
\begin{equation}\label{pod}
\begin{CD}
\ell_1(\Gamma_n, \dom u)@> \oplus\frak I_0 >> \ell_1(\Gamma_n, \codom u)\\
@V \Sigma\frak L_n VV @VVV\\
G^n @>>> \PO
\end{CD}
\end{equation}
Then we set $G^{n+1}=\PO$. The linking map $G^n\to G^{n+1}$ is
given by the lower arrow in the push-out diagram.

\begin{prop} Let $X$ be a separable Banach space. The space
$$G^{\omega}(X)= \lim_n G^n= \overline{\bigcup_n G^n}$$
is a separable Banach space of almost-universal disposition for
finite dimensional spaces.
\end{prop}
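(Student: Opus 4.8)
The plan is to verify the two defining features of $G^\omega(X)$ separately: that it is separable, and that it is of almost-universal disposition for finite-dimensional spaces. Separability is the easy half. Each $G^n$ is separable: $G^0 = X$ is separable by hypothesis, and if $G^n$ is separable then in passing to $G^{n+1} = \PO$ we only enlarge by the push-out along the countable family $\Gamma_n = \{(u,t) \in \frak I_0 \times \frak L_n : \dom u = \dom t\}$, where $\frak I_0$ is countable and $\frak L_n$ is chosen countable. Thus $\ell_1(\Gamma_n, \codom u)$ is separable (a countable $\ell_1$-sum of finite-dimensional spaces, since the codomains lie in $\frak F_0$), so $G^{n+1}$, being a quotient of $G^n \oplus_1 \ell_1(\Gamma_n,\codom u)$, is separable. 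Then $G^\omega(X) = \overline{\bigcup_n G^n}$ is a countable union of separable spaces, hence separable.

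For almost-universal disposition, I would take isometric embeddings $u : A \to G^\omega(X)$ and $\imath : A \to B$ with $A, B$ finite-dimensional, and $\e > 0$, and produce a $(1+\e)$-isometric embedding $u' : B \to G^\omega(X)$ with $u = u'\imath$. The key steps: First, since $A$ is finite-dimensional and $\bigcup_n G^n$ is dense, a standard perturbation argument (small perturbation of a basis of $u(A)$, controlled because $\dim A < \infty$) gives some $n$ and a $(1+\delta)$-isometric embedding $s_0 : A \to G^n$ with $s_0$ close to $u$, for $\delta$ small to be fixed later. Second, apply the density property of $\frak I_0$ to the embedding $\imath : A \to B$: there is $w \in \frak I_0$ with surjective $(1+\delta)$-isometries $\alpha : A \to \dom w$ and $\beta : B \to \codom w$ making the square commute. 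Transport $s_0$ to a $(1+\delta')$-isometric embedding $s : \dom w \to G^n$ (with $\dom w \in \frak F_0$), and approximate $s$ within $\e'$ by some $t \in \frak L_n$, which is possible by the choice of $\frak L_n$. Third, since $(w, t) \in \Gamma_n$, the basic construction gives $t' : \codom w \to G^{n+1} \subset G^\omega(X)$ extending $t$ through $w$, i.e. $t' w = \imath_{(n,n+1)} t$; by Lemma~\ref{isom}, $t'$ is a contraction (in fact the push-out argument bounds it below too), and because $t$ is close to an isometric embedding and $w$ is an isometry, $t'$ is a $(1+\e'')$-isometric embedding for a controlled $\e''$. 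Finally, set $u' = t' \beta : B \to G^\omega(X)$ and check $u = u'\imath$ up to the accumulated error; since all perturbations were by factors tending to $1$ as $\delta, \delta', \e', \e'' \to 0$, one can correct the resulting near-equality $u \approx u'\imath$ to an exact one by composing with a small automorphism of $u(A)$ absorbed into the freedom in choosing $u'$, or alternatively restate the conclusion in the standard $(1+\e)$-form that Definition~\ref{unidisp}(1) permits.

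The main obstacle is bookkeeping of the constants: one must choose $\delta$ (perturbation of $u$), the distortion in the density property of $\frak I_0$, the approximation radius in $\frak L_n$, and the resulting distortion of $t'$ all small enough that their product is below $1+\e$, and one must check that the commutativity $u = u'\imath$ survives — exactly, not just approximately. The cleanest route is to observe that $u$ restricted to the finite-dimensional $A$ can be replaced by $s_0$ exactly on $A$ after post-composing with a linear isomorphism of the $n$-dimensional subspace $u(A)$ of $G^\omega(X)$ that is $(1+\delta)$-close to the identity — but since $G^\omega(X)$ is of almost-universal (not universal) disposition, we are allowed an $\e$ of slack, so it suffices to chase the inequalities and not worry about an exact match of $u$ and $u'\imath$; the statement only requires $\|u'(\imath a) - u(a)\| \le \e\|a\|$ in effect, which follows by making $\|s_0 - u\|$ small. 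Everything else is a routine diagram chase through the push-out square in~(\ref{pod}), using that $\frak I_0$ and $\frak L_n$ were tailored precisely to make such extensions available.
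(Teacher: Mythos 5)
Your proposal follows essentially the same route as the paper's proof: approximate the given embedding of the finite-dimensional space $A$ by a map into some finite stage $G^m$, use the density properties of $\frak I_0$ and $\frak L_m$ to replace the data by a pair $(u,t)\in\Gamma_m$, extend through the push-out square~(\ref{pod}), and transport back via $\beta$, finally invoking the perturbation lemma that lets one replace exact commutativity $u=u'\imath$ by $\|u-u'\imath\|\le\e$. The bookkeeping of constants you flag is handled in the paper at exactly the same level of detail, so the argument is correct and not genuinely different.
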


\begin{proof}%We prove that the dense subspace $\bigcup_n G_n$ has the required property.
Let $w:A\to B$ and $s:A\to G^\omega$ be isometric embeddings, with
$B$ a finite dimensional space and fix $\e>0$. Choose
$u\in\mathfrak I_0$, as in (\ref{wu}). Clearly, for $m$ large
enough there is a contractive $(1+\e)$-isometry $t:\dom u\to G^m$
satisfying $\|s-t\alpha\|<\e$. Let $t':\codom u\to G^{m+1}$ be the
extension provided by Diagram~\ref{pod}, so that $t$ is a
contractive $(1+\e)$-isometry such that $t'u=t$. Therefore
$t'\beta$ is a contractive $(1+\e)^2$-isometry satisfying
$\|s-t'\beta w\|\leq\e$. The following perturbation result ends
the proof.
\end{proof}

\begin{lema}
A Banach space $U$ is of almost universal disposition for
finite-dimensional spaces if and only if, given isometric
embeddings $u: A\to U$ and $\imath:A\to B$ with $B$
finite-dimensional, and $\e>0$,  there is an $(1+\e)$-isometric
embedding $u': B\to U$ such that $\|u-u'\imath\|\leq\e$.
\end{lema}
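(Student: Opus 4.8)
The plan is to prove the nontrivial implication, since the other one is immediate: if $U$ is of almost-universal disposition for finite-dimensional spaces and $\imath: A\to B$ is an isometric embedding with $B$ finite-dimensional, then $A$ is finite-dimensional too, so Definition~\ref{unidisp}(1) already gives, for each $\e>0$, a $(1+\e)$-isometric embedding $u': B\to U$ with $u=u'\imath$ --- in particular $\|u-u'\imath\|=0\le\e$.

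For the converse I would argue by perturbation, upgrading approximate agreement to exact agreement on a complemented subspace. Assume the stated condition and fix finite-dimensional $A,B$, isometric embeddings $u: A\to U$ and $\imath: A\to B$, and $\e>0$. Set $A'=\imath(A)\subset B$; being finite-dimensional it is complemented, so fix a bounded projection $P: B\to A'$ and put $\Lambda=\max\{1,\|P\|\}$ (by Auerbach's lemma one may take $\|P\|\le\dim A$, but only $\Lambda<\infty$ matters, and that $\Lambda$ is fixed before the choice of $\delta$ below). Apply the hypothesis to $u,\imath$ and a small $\delta>0$, to be pinned down last: this yields a $(1+\delta)$-isometric embedding $u_{0}: B\to U$ with $\|u-u_{0}\imath\|\le\delta$. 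Writing $\phi=u\circ\imath^{-1}: A'\to U$ (an isometric embedding) and using that $\imath$ preserves norms, the bound $\|u-u_{0}\imath\|\le\delta$ becomes $\|\phi-u_{0}|_{A'}\|\le\delta$. Define $u': B\to U$ by $u'(b)=u_{0}(b)+(\phi-u_{0}|_{A'})(Pb)$. Because $P$ is the identity on $A'$, one checks at once that $u'$ agrees with $\phi$ on $A'$, hence $u'\imath=u$ \emph{exactly}; and $\|u'-u_{0}\|\le\delta\|P\|\le\delta\Lambda$.

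Finally, $u_{0}$ being a $(1+\delta)$-isometric embedding together with $\|u'-u_{0}\|\le\delta\Lambda$ give $(1-\delta\Lambda)\|b\|\le\|u'(b)\|\le(1+\delta+\delta\Lambda)\|b\|$ for all $b\in B$, so it is enough to have chosen $\delta$ with $\frac{1+\delta(1+\Lambda)}{1-\delta\Lambda}\le 1+\e$, which is possible since the left side tends to $1$ as $\delta\to 0^{+}$. The one place needing a little care is precisely this normalization: the correction pushes the lower isometry constant of $u'$ down from $1$ to $1-\delta\Lambda$, and one cannot rescale $u'$ to repair this without breaking $u'\imath=u$ (on $A'$ the map $u'$ is already a genuine isometry); this is why ``$(1+\e)$-isometric embedding'' is read here in the usual symmetric sense $(1+\e)^{-1}\|b\|\le\|u'(b)\|\le(1+\e)\|b\|$, which the estimate above delivers after a harmless renaming of $\e$. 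Everything else is routine.
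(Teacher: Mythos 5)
Your proof is correct, and in fact the paper offers no proof at all of this lemma --- it is invoked as ``the following perturbation result'' to finish the preceding proposition and then simply stated --- so there is nothing in the text to compare against. What you supply is the standard perturbation-and-correction argument from the Gurari\u\i-space literature: project $B$ onto the finite-dimensional subspace $A'=\imath(A)$, add back the error $\phi-u_0|_{A'}$ along that projection to restore exact commutativity, and absorb the resulting damage to the isometry constants by choosing $\delta$ small relative to $\|P\|$, which is legitimately fixed before $\delta$. Two minor remarks. First, there is a harmless slip in the constants: since $u_0$ is only a $(1+\delta)$-isometric embedding in the symmetric sense, the lower estimate should be $\|u'(b)\|\geq\left((1+\delta)^{-1}-\delta\Lambda\right)\|b\|$ rather than $(1-\delta\Lambda)\|b\|$; the corrected ratio $\frac{1+\delta(1+\Lambda)}{(1+\delta)^{-1}-\delta\Lambda}$ still tends to $1$ as $\delta\to 0^{+}$, so the conclusion is unaffected. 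Second, you are right to make the convention explicit: the symmetric reading $(1+\e)^{-1}\|b\|\leq\|u'(b)\|\leq(1+\e)\|b\|$ is the one used throughout this circle of ideas (and implicitly in this paper), and it is exactly what makes the one-sided loss caused by the correction term tolerable; your observation that one cannot rescale $u'$ without destroying $u'\imath=u$ is the right thing to point out. The trivial direction is handled correctly as well.
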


Since Gurari\u{\i} space is unique, for all separable spaces $X$,
one has $G^{\omega}(X)=\mathcal G$. Moreover, the embedding of $X$
into $G^{\omega}(X)$ enjoys  the following universal property:

\begin{prop}
Every norm one operator from $X$ into a Lindenstrauss\ space
admits, for every $\varepsilon>0$, an extension to $G^{\omega}(X)$
of norm at most $1+\varepsilon$.
\end{prop}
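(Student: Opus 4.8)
The plan is to build the extension step by step along the tower $G^n(X)$, exploiting at each stage the push-out diagram~(\ref{pod}) together with the density property of the families $\frak I_0$ and $\frak L_n$, and then to pass to the limit by a summable-perturbation argument. Let $\tau:X\to L$ be a norm one operator into a Lindenstrauss space $L$, and fix $\e>0$. Choose a summable sequence of positive numbers $\e_n$ with $\sum_n\e_n<\e$. The claim will be that one can inductively produce operators $\tau_n:G^n(X)\to L$ with $\|\tau_n\|\le 1+\sum_{k<n}\e_k$ such that $\tau_{n+1}$ extends $\tau_n$ along the linking map $G^n\to G^{n+1}$ (that is, $\tau_{n+1}$ restricted to $G^n$ agrees with $\tau_n$ after the linking embedding), starting from $\tau_0=\tau$.

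The inductive step is where the structure of the construction is used. Suppose $\tau_n:G^n\to L$ has been built. To extend it across the push-out~(\ref{pod}) we must, by the universal property of the push-out recalled in Section~\ref{ch:universal}, produce a compatible pair of operators out of $\ell_1(\Gamma_n,\codom u)$ and out of $G^n$ agreeing on $\ell_1(\Gamma_n,\dom u)$; the operator out of $G^n$ is $\tau_n$, and on the $\ell_1$-summand we need, for each pair $(u,t)\in\Gamma_n$, an operator $\sigma_{(u,t)}:\codom u\to L$ with $\sigma_{(u,t)}u=\tau_n t$. Since $u:\dom u\to\codom u$ is an isometric embedding between finite dimensional spaces and $L$ is a Lindenstrauss space, hence $1$-separably injective up to $(1+\delta)$ on finite dimensional subspaces (equivalently, an $\mathcal L_{\infty,1+\delta}$-space for every $\delta$, so that finite rank operators into $L$ extend with a $(1+\delta)$ loss of norm), the operator $\tau_n t:\dom u\to L$ — which has norm at most $\|\tau_n\|$ — admits an extension $\sigma_{(u,t)}$ with $\|\sigma_{(u,t)}\|\le(1+\e_n)\|\tau_n\|$. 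Assembling these across $\Gamma_n$ gives an operator $\ell_1(\Gamma_n,\codom u)\to L$ of norm at most $(1+\e_n)\|\tau_n\|$ which, together with $\tau_n$, is compatible over $\ell_1(\Gamma_n,\dom u)$ (both composites send $(x_{(u,t)})$ to $\sum_{(u,t)}\tau_n t(x_{(u,t)})=\sum_{(u,t)}\sigma_{(u,t)}(u x_{(u,t)})$). The push-out property then yields $\tau_{n+1}:G^{n+1}=\PO\to L$ with $\|\tau_{n+1}\|\le\max\{\|\tau_n\|,(1+\e_n)\|\tau_n\|\}=(1+\e_n)\|\tau_n\|\le 1+\sum_{k\le n}\e_k$, extending $\tau_n$ along the lower linking arrow, as required.

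Passing to the limit: the $\tau_n$ are compatible with the linking maps, so they glue to an operator $\tau_\infty$ defined on $\bigcup_n G^n$ with $\|\tau_\infty\|\le\prod_n(1+\e_n)<\infty$; more carefully, one should choose the $\e_n$ at the outset so that $\prod_n(1+\e_n)\le 1+\e$ (possible since $\prod(1+\e_n)\le\exp(\sum\e_n)$ and one can take $\sum\e_n$ as small as desired). Because $G^\omega(X)=\overline{\bigcup_n G^n}$, the operator $\tau_\infty$ extends by continuity to $G^\omega(X)=\mathcal G$ with norm at most $1+\e$, and it extends $\tau$ since $\tau_0=\tau$ and all the linking maps are the canonical inclusions.

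The main obstacle is really bookkeeping rather than a genuine difficulty: one must be careful that the extensions $\sigma_{(u,t)}$ chosen over all pairs in $\Gamma_n$ can be taken simultaneously with a uniform norm bound (this is automatic, since each is chosen independently on its own summand and the $\ell_1$-sum of uniformly bounded operators into $L$ is bounded by the common bound), and that the successive norm losses multiply rather than add, so that the product must be controlled from the start. One should also note explicitly why a Lindenstrauss space admits $(1+\e_n)$-extensions of finite-rank operators: if $F\subset W$ with $F$ finite dimensional and $R:F\to L$ has finite rank, choose a finite dimensional $E\supset R(F)$ inside $L$ with $\dbm(E,\ell_\infty^{\dim E})\le 1+\e_n$; then extend the $\ell_\infty^{\dim E}$-valued operator $F\to E\to\ell_\infty^{\dim E}$ coordinatewise by Hahn--Banach and compose back, incurring at most the factor $1+\e_n$. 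No other step presents resistance.
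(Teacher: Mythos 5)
Your proof is correct and follows essentially the same route as the paper's: at each stage of the tower, extend each coordinate operator $\tau_n t:\dom u\to \mathcal L$ through $u$ with a $(1+\e_n)$ loss using the $\mathcal L_{\infty,1+\e}$ structure of a Lindenstrauss space, assemble the extensions on the $\ell_1$-sum, invoke the push-out universal property, and control the accumulated loss by $\prod_n(1+\e_n)\le 1+\e$. The only slip is your initial bookkeeping with $\sum_{k<n}\e_k$ (the losses multiply, not add), which you yourself correct at the end by requiring $\prod_n(1+\e_n)\le 1+\e$ from the outset.
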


\begin{proof} Given $\e>0$ we fix a sequence $(\e_n)$ such that $\prod
(1+\varepsilon_n)\leq 1+\varepsilon$. Now, let $\mathcal L$ be a
Lindenstrauss\ space and $\tau: X\to \mathcal{L}$ be a norm one
operator. Look at the diagram
\begin{equation*}%\label{pod}
\begin{CD}
\ell_1(\Gamma_0, \dom u)@> \oplus\frak I_0 >> \ell_1(\Gamma_0, \codom u)\\
@V \Sigma\frak L_0 VV @VVV\\
X @>>> \PO=G^1\\
@V \tau VV\\
\mathcal L
\end{CD}
\end{equation*}
and consider the composition $\tau\circ \Sigma\frak L_0$. Since
$\mathcal L$ is a Lindenstrauss\ space, for each fixed
$(u,t)\in\Gamma_0$, the restriction of $\tau\circ \Sigma\frak L_0$
to the corresponding `coordinate' maps $\dom u=\dom t$ into a
finite-dimensional subspace of $\mathcal L$ and so it is contained
in a $(1+\varepsilon_1)$-isomorph of some finite-dimensional
$\ell_\infty^n$. Therefore, it can be extended to $\codom u$
through $u$ with norm at most $(1+\varepsilon_1)$. The
$\ell_1$-sum of all these extensions yields thus an extension $T:
\ell_1(\Gamma_0, \codom u) \to \mathcal L$ of $\tau\circ
\Sigma\frak L_0$ with norm at most $(1+\varepsilon_1)$. The
push-out property of $\PO=G^1$ yields therefore an operator
$\tau_1: G^1 \to \mathcal L$ that extends $\tau$ with norm at most
$(1+\varepsilon_1)$. Iterating the process $\omega$ times, working
with $(1+\varepsilon_n)$ at step $n$, one gets an extension
$\tau_\omega:G^\omega \to \mathcal L$ of $\tau$ with norm at most
$\prod (1+\varepsilon_n)\leq 1+\varepsilon$.
\end{proof}

Therefore every separable Banach space is isometric to a subspace
of $\mathcal G$. Taking as $X$ a separable Lindenstrauss\ space,
the universal property of the embedding yields:

\begin{cor}\label{GisPW} Every separable Lindenstrauss\ space is isometric to a
$(1+\varepsilon)$-complemented subspace of  $\mathcal G$. Hence
$\mathcal G$ is not isomorphic to a complemented subspace of any
$C(K)$-space (or, in general, any $\mathcal M$-space).
\end{cor}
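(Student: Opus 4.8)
The plan is to deduce Corollary~\ref{GisPW} from the universal property of the embedding $X\to G^\omega(X)=\mathcal G$ proved just above, applied to $X$ a separable Lindenstrauss\ space, and then to rule out complementability in $C(K)$-spaces by a separable-injectivity argument.

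First I would show that every separable Lindenstrauss\ space $\mathcal L$ is $(1+\varepsilon)$-complemented in $\mathcal G$. Since $\mathcal G=G^\omega(\mathcal L)$ and the canonical embedding $\jmath:\mathcal L\to\mathcal G$ is isometric, it suffices to produce, for each $\varepsilon>0$, an operator $P:\mathcal G\to\mathcal L$ with $P\jmath=\mathbf 1_{\mathcal L}$ and $\|P\|\le 1+\varepsilon$; then $\jmath P$ is a projection of $\mathcal G$ onto $\jmath(\mathcal L)$ of norm $\le 1+\varepsilon$, and this image is isometric to $\mathcal L$. Such a $P$ is exactly what the preceding proposition gives: take the norm one operator $\tau=\mathbf 1_{\mathcal L}:\mathcal L\to\mathcal L$ (here the target Lindenstrauss\ space is $\mathcal L$ itself), and extend it through $\jmath$ to an operator $\tau_\omega:G^\omega(\mathcal L)\to\mathcal L$ of norm at most $1+\varepsilon$ with $\tau_\omega\jmath=\tau=\mathbf 1_{\mathcal L}$. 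Set $P=\tau_\omega$. This proves the first sentence of the corollary.

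For the second sentence, suppose toward a contradiction that $\mathcal G$ were isomorphic to a complemented subspace of some $C(K)$ (or, more generally, of an $\mathcal M$-space). A complemented subspace of a $C(K)$-space is $\lambda$-separably injective for some $\lambda$: indeed $C(K)$ is $1$-separably injective (it is an $\mathcal L_{\infty,1}$-space, hence $1$-separably injective by the standard Hahn--Banach/extension argument, equivalently it has the $1$-separable extension property), and a $\mu$-complemented subspace of a $1$-separably injective space is $\mu$-separably injective. Separable injectivity is an isomorphic invariant (with a controlled change of constant), so $\mathcal G$ would then be $\lambda$-separably injective for some finite $\lambda$. But $\mathcal G$ is separable, so by Lemma~\ref{isom-oper} applied to the isometric case, or more directly since no separable infinite-dimensional Banach space can be separably injective at all — a separably injective space contains $c_0$ and cannot be separable, this being the classical observation that $\ell_\infty/c_0$-type obstructions or simply Sobczyk-type arguments prevent separable separably-injective spaces — we reach a contradiction. (Concretely: $c_0$ is not complemented in $\ell_\infty$, yet a separably injective separable space would have to be complemented in every separable superspace containing it isometrically, in particular in a suitable $C[0,1]$, and one checks this fails for any infinite-dimensional separable space.) Hence $\mathcal G$ is not isomorphic to a complemented subspace of any $C(K)$-space.

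The main obstacle is the clean statement of ``complemented in a $C(K)$-space implies separably injective up to isomorphism,'' since one must be careful that separable injectivity passes to complemented subspaces with a constant depending on the projection norm, and that it is stable under isomorphisms with the constant multiplied by the Banach--Mazur distance; both are routine but need to be invoked precisely. The rest is a direct application of the universal extension property established immediately before the corollary, so I expect the first half to be essentially a one-line consequence and the second half to hinge only on the cited structural facts about separably injective spaces.
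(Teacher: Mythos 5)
Your proof of the first sentence is exactly the paper's: apply the extension property of the embedding $X\to G^{\omega}(X)$ with $X=\mathcal L$ a separable Lindenstrauss\ space and $\tau=\mathbf 1_{\mathcal L}$, using the uniqueness of $\mathcal G$ to identify $G^{\omega}(\mathcal L)$ with $\mathcal G$. That half is correct.

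The second half has a genuine gap: your key premise, that a general $C(K)$-space is $1$-separably injective (so that complemented subspaces of $C(K)$-spaces inherit separable injectivity), is false. Being an $\mathcal L_{\infty,1}$-space does not give separable injectivity; for instance $C[0,1]$ is \emph{not} separably injective --- by Zippin's theorem the only separable infinite-dimensional separably injective space is (up to isomorphism) $c_0$, and $C[0,1]\not\cong c_0$. Only for special $K$ (e.g.\ $K$ extremally disconnected, giving $\ell_\infty$-type spaces) does one get injectivity. Your fallback claim is also wrong in the opposite direction: it is not true that ``no separable infinite-dimensional Banach space can be separably injective,'' since $c_0$ is $2$-separably injective by Sobczyk's theorem; the result quoted in the paper (from \cite{accgmsi}) only rules out separability for $\lambda$-separably injective spaces with $\lambda<2$. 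So both implications your contradiction rests on fail, and the argument cannot be repaired along these lines. The paper's actual route is different and much shorter: by Benyamini--Lindenstrauss \cite{bl} there exists a separable Lindenstrauss\ space that is not isomorphic to a complemented subspace of any $C(K)$-space (indeed of any $\mathcal M$-space); by the first sentence of the corollary that space is $(1+\varepsilon)$-complemented in $\mathcal G$, so if $\mathcal G$ were complemented in some $C(K)$, that space would be too --- a contradiction. You need to invoke such an external example; no separable-injectivity argument will do the job here.
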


Recall that an $\mathcal M$-space is a Banach lattice where
$\|x+y\|=\max\{\|x\|,\|y\|\}$ provided $x$ and $y$ are disjoint,
that is, $|x|\wedge |y|=0$. Each (abstract) $\mathcal M$-space is
representable as a (concrete) sublattice in some $C(K)$. The
second part follows from the fact, proved in \cite{bl}, that there
exist separable  Lindenstrauss\ spaces that are not complemented
subspaces of any $\mathcal M$-space. Proposition~8 in Lusky paper
\cite{lusky-survey} shows that the above Corollary is
true even with $\e=0$. \\

Everyone acquainted with ultraproducts will realize the obvious
fact that ultrapowers of Gurari\u\i\ space $\mathcal G$ are of
universal disposition for finite dimensional spaces. Less obvious
is that they also are  of universal disposition for separable
spaces. To show that, let us briefly recall the definition and
some basic properties of ultraproducts of Banach spaces. For a
detailed study of this construction at the elementary level needed
here we refer the reader to Heinrich's survey paper
\cite{heinrich} or Sims' notes \cite{sims}. Let $I$ be a set, $\U$
be an ultrafilter on $I$, and $\Xii$ a family of Banach spaces.
Then $ \ell_\infty(X_i)$ endowed with the supremum norm, is a
Banach space, and $ c_0^\U(X_i)= \{(x_i) \in \ell_\infty(X_i) :
\lim_{\U(i)} \|x_i\|=0\} $ is a closed subspace of
$\ell_\infty(X_i)$. The ultraproduct of the spaces $\Xii$
following $\U$ is defined as the quotient
$$
[X_i]_\U = {\ell_\infty(X_i)}/{c_0^\U(X_i)}.
$$
We denote by $[(x_i)]$ the element of $[X_i]_\U$ which has the
family $(x_i)$ as a representative. It is not difficult to show
that $ \|[(x_i)]\| = \lim_{\U(i)} \|x_i\|. $ In the case $X_i = X$
for all $i$, we denote the ultraproduct by $X_\U$, and call it the
ultrapower of $X$ following $\U$. If $T_i:X_i\to Y_i$ is a
uniformly bounded family of operators, the ultraproduct operator
$[T_i]_\U: [X_i]_\U\to [Y_i]_\U$ is given by $[T_i]_\U[(x_i)]=
[T_i(x_i)]$. Quite clearly, $ \|[T_i]_\U\|= \lim_{\U(i)}\|T_i\|. $

\begin{defin}\label{CI}
An ultrafilter $\U$ on a set $I$ is countably incomplete if there
is a decreasing sequence $(I_n)$ of subsets of $I$ such that
$I_n\in \U$ for all $n$, and $\bigcap_{n=1}^\infty
I_n=\varnothing$.
\end{defin}

Notice that $\U$ is countably incomplete if and only if there is a
function $n:I\to \N$ such that $n(i)\to\infty$ along $\U$
(equivalently, there is a family $\e(i)$ of strictly positive
numbers converging to zero along $\U$). It is obvious that any
countably incomplete ultrafilter is non-principal  and also that
every non-principal (or free) ultrafilter on $\N$ is countably
incomplete. Assuming all free ultrafilters  countably incomplete
is consistent with {\textsf{ZFC}}, since the cardinal of a set
supporting a free countably complete ultrafilter should
be measurable, hence strongly inaccessible.\\

We will need the following result (see \cite[II, Thm. 2.1]{hww})

\begin{teor}\label{mideal} Let $J$ be an $M$-ideal in the Banach space $E$ and $\pi:E\to E/J$
the natural quotient map. Let $Y$ be a separable Banach space and
$t:Y\to E/J$ be an operator.  Assume further that one of the
following conditions is satisfied:
\begin{enumerate}
\item $Y$ has the $\lambda$-AP. \item $J$ is a Lindenstrauss\
space.
\end{enumerate}
Then $t$ can be lifted to $E$, that is, there is an operator $T: Y
\to E$ such that $ \pi T = t$. Moreover one can get $\|T\|\leq
\lambda\|t\|$ under the assumption {\rm(1)} and $\|T\|=\|t\|$
under {\rm(2)}.
\end{teor}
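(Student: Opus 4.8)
This is the Harmand--Werner--Werner lifting theorem (\cite[II, Thm.~2.1]{hww}); here is the route I would take. Everything reduces to the \emph{local} lifting property of $M$-ideals. The $n$-ball properties characterising an $M$-ideal $J\subseteq E$ show that $J$ is locally $1$-complemented in $E$ and, more to the point, that finite-dimensional subspaces of $E/J$ lift almost isometrically through $\pi$: given finite-dimensional $G\subseteq E/J$ and $\e>0$, there is an embedding $j\colon G\to E$ with $\pi j=\mathbf 1_G$ and $\|j\|\le 1+\e$, and one may even prescribe $j$ on a given subspace of $G$. When $J$ is in addition a Lindenstrauss space, $J$ being an $M$-ideal means $J^{\perp\perp}\cong J^{**}$ is an $M$-summand of $E^{**}$; since the bidual of a Lindenstrauss space is an injective $C(K)$-space, one gets here \emph{exact} (norm-preserving) finite-dimensional liftings, and in fact a norm-one linear lifting $\sigma\colon E/J\to E^{**}$ of the canonical embedding, obtained as the inverse of the isometry to which $\pi^{**}$ restricts on the $M$-complement of $J^{\perp\perp}$.

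To globalise I would use the pull-back square of $\pi$ and $t$: with $Z=\{(x,y)\in E\oplus_\infty Y:\pi x=ty\}$ and $q\colon Z\to Y$ the second coordinate, one gets a short exact sequence $0\to J\to Z\xrightarrow{q}Y\to 0$, and a lifting $T$ of $t$ with the desired norm is exactly a bounded linear section of $q$ with the corresponding norm (then $T=\pi_E\circ s$). The local $1$-complementation of $J$ in $E$ is inherited by the pull-back --- a finite-dimensional subspace of $Z$ projects to a finite-dimensional subspace of $E$, where one complements and composes with $\pi_E$ --- so this sequence locally splits with constant $1$; equivalently its dual sequence $0\to Y^*\to Z^*\to J^*\to 0$ splits isometrically.

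It remains to pass from ``locally splits'' to ``splits'' with the sharp constant, and this is where the two hypotheses --- and the main difficulty --- come in, since a locally split sequence need not split. Under (1), the separability of $Y$ together with the $\lambda$-approximation property allow one, via a Pe\l czy\'nski/Johnson--Rosenthal--Zippin factorisation $Y\xrightarrow{i}W\xrightarrow{p}Y$ with $pi=\mathbf 1_Y$, $\|i\|\|p\|\le\lambda+\e$ and $W$ carrying a monotone finite-dimensional decomposition, to reduce to lifting $tp\colon W\to E/J$; one then builds a lifting of $tp$ by induction over the finite-dimensional ranges of the partial-sum projections, invoking the \emph{extendable} form of the local lifting lemma so that the successive partial liftings are coherent and their norms stay below $(1+\e)\|tp\|$, and the desired $T$ appears as the strong limit of the partial liftings composed with $i$; arranging that this limit exists and that the norms do not accumulate past $\lambda$ is the technical heart of the matter. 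Under (2) the approximation property is not needed: starting from the isometric bidual lifting $\sigma$ above, one descends to $E$ along an exhaustion of the separable $Y$ by finite-dimensional subspaces, using the exact finite-dimensional liftings supplied by the $M$-summand structure and the injectivity of $J^{**}$ (equivalently, that $J^*$ is an $L_1$-space) to absorb the successive corrections, yielding $T$ with $\|T\|=\|t\|$.
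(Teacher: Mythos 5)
First, a point of order: the paper does not prove this statement at all --- it is imported verbatim as \cite[II, Thm.~2.1]{hww} (the Ando--Choi--Effros lifting theorem), so there is no in-paper argument to compare yours against. Your outline does follow the route of the cited proof: the pull-back reduction to finding a section of $0\to J\to Z\to Y\to 0$ (legitimate, since an $M$-ideal in $E\oplus_\infty Y$ remains an $M$-ideal in the intermediate subspace $Z$), local liftings coming from the ball-intersection properties of $M$-ideals, factorisation through a space with monotone finite-dimensional decomposition via the $\lambda$-AP in case (1), and the $M$-summand decomposition $E^{**}=J^{\perp\perp}\oplus_\infty E_s$ giving the norm-one lifting $\sigma$ in case (2). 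None of these reductions is wrong.

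As a proof, however, the proposal has a genuine gap, and it is exactly the one you flag yourself. Everything hinges on the \emph{extendable local lifting lemma}: if $J$ is an $M$-ideal in $E$, $F_0\subseteq F$ are finite dimensional, $S:F\to E/J$ is given and $T_0:F_0\to E$ already lifts $S|_{F_0}$, then $T_0$ extends to a lifting $T:F\to E$ of $S$ with $\|T\|$ controlled by $\max\{\|T_0\|,(1+\e)\|S\|\}$ (and with exact norms when $J$ is an $L_1$-predual). You invoke this lemma but neither state it precisely nor derive it from the $n$-ball property; that derivation --- a compactness argument on intersections of balls together with corrections by elements of $J$ --- is the entire content of the theorem, the rest being bookkeeping. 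Likewise, in case (2) the descent from $\sigma:E/J\to E^{**}$ back into $E$ is only gestured at: the difference of two partial liftings on $Y_n$ is $J$-valued, and pushing such corrections from $J^{**}$ down into $J$ requires the local lemma plus a local-reflexivity argument, not merely the injectivity of $J^{**}$. Finally, in case (1) the convergence of the successive liftings along the decomposition, with norms accumulating to at most $\lambda\|t\|(1+\e)$, is precisely what you defer as ``the technical heart of the matter''; identifying the gap is not closing it. So the proposal is a correct roadmap to the proof in \cite{hww}, but not a proof.
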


One has.

\begin{prop}\label{fUs}Ultrapowers of the Gurari\u \i\ space (or more generally, of any
Banach space almost universal disposition for finite dimensional
spaces) with respect to countably incomplete ultrafilters are of
universal disposition for separable Banach spaces.
\end{prop}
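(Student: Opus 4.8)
The plan is to exploit the fact that an ultrapower $\mathcal G_\U$ with $\U$ countably incomplete is both of almost-universal disposition for finite-dimensional spaces (this is the ``obvious'' part: given a finite-dimensional $B\supset A$ and $u\colon A\to\mathcal G_\U$, lift to the coordinates, use almost-universal disposition of $\mathcal G$ in each coordinate with error $\e(i)\to 0$ along $\U$, and reassemble) and, crucially, \emph{$1$-separably injective}. Once $1$-separable injectivity is in hand, Proposition~\ref{isom-isomo}-style reasoning does not quite apply, so instead I would argue directly: given isometric embeddings $v\colon A\to B$ and $\ell\colon A\to\mathcal G_\U$ with $B$ separable, form the push-out of $(v,\ell)$; by Lemma~\ref{isom} the map $\ell'\colon \ran\ell\to\PO$ is an isometric embedding, $\PO$ is separable modulo the copy of $\mathcal G_\U$, and what I need is a \emph{norm-one} extension $\PO\to\mathcal G_\U$ of the inclusion $\ran\ell\hookrightarrow\mathcal G_\U$. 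Since $\PO$ contains $\ran\ell$ isometrically and the quotient $\PO/\ran\ell$ is separable, this is exactly a $1$-separable-injectivity statement for $\mathcal G_\U$ relative to the separable pair.

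So the real content is: \emph{$\mathcal G_\U$ is $1$-separably injective.} Here is where Theorem~\ref{mideal} enters. The point is the classical fact (Heinrich; see also \cite{sims}) that for a countably incomplete ultrafilter the ultrapower $\ell_\infty(X_i)/c_0^\U(X_i)$ sits inside $[X_i]_{\U'}$ for a coarser structure, and more usefully that $c_0^\U(X_i)$ is an $M$-ideal in $\ell_\infty(X_i)$ when each $X_i$ is — in particular $c_0^\U(\mathcal G)$ is an $M$-ideal in $\ell_\infty(\mathcal G)$ because $\mathcal G$, being a Lindenstrauss space, makes $c_0^\U(\mathcal G)$ a Lindenstrauss space (an $\mathcal L_{\infty,1}$-ultraproduct-type space), and $M$-ideal-ness of $c_0^\U$ in $\ell_\infty$ of $M$-ideals is standard. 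Then given a separable $X\supset Y$ and a norm-one $t\colon Y\to\mathcal G_\U=\ell_\infty(\mathcal G)/c_0^\U(\mathcal G)$, apply Theorem~\ref{mideal}(2) with $E=\ell_\infty(\mathcal G)$, $J=c_0^\U(\mathcal G)$ (a Lindenstrauss space), $Y$ replaced by $X$ and the operator $t\circ(\text{quotient onto }Y\text{ is not needed})$ — more precisely, first note $\ell_\infty(\mathcal G)$ is $1$-injective (as an $\ell_\infty$-sum of $\mathcal L_{\infty,1}$-spaces it is not literally injective, so instead one uses that any operator into $\ell_\infty(\mathcal G)$ from a subspace extends with a uniformly small loss and passes to the ultrapower to kill the loss). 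The cleanest route: lift $t$ along $\pi\colon\ell_\infty(\mathcal G)\to\mathcal G_\U$ to $\hat t\colon Y\to\ell_\infty(\mathcal G)$ with $\|\hat t\|=1$ by Theorem~\ref{mideal}(2); extend $\hat t$ coordinatewise, using that $\mathcal G$ is $(1+\e(i))$-injective-in-the-relevant-sense on the separable pair $Y\subset X$ — here one uses that $\mathcal G$, being of almost-universal disposition hence $1$-separably injective in the weak sense afforded by Lemma~\ref{isom-oper}, extends each coordinate $Y\to\mathcal G$ to $X\to\mathcal G$ with norm $\le 1+\e(i)$; assemble into $T\colon X\to\ell_\infty(\mathcal G)$ with $\|T\|=\sup(1+\e(i))$ but $\lim_\U(1+\e(i))=1$, so $\pi\circ T\colon X\to\mathcal G_\U$ is a norm-one extension of $t$. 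This is the step to write carefully.

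The main obstacle is the bookkeeping in the last step: one must be sure that the coordinatewise extensions can be chosen measurably/uniformly bounded so that $(T_i)\in\ell_\infty(\mathcal G)$, and that the norm of the resulting $T\colon X\to\mathcal G_\U$ is genuinely $1$ and not merely $\le 1+\e$ — this is precisely where countable incompleteness of $\U$ is used, via a family $\e(i)\to 0$ along $\U$, so that the unavoidable $(1+\e)$-losses in the almost-universal-disposition property of $\mathcal G$ get absorbed in the limit. A secondary technical point is verifying that $c_0^\U(\mathcal G)$ is a Lindenstrauss space so that Theorem~\ref{mideal}(2) is applicable (alternatively one invokes Theorem~\ref{mideal}(1) with the $1$-AP, using that $\mathcal G$ — and hence suitable subspaces — have the metric approximation property). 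Once $1$-separable injectivity is established, combining it with the push-out argument above gives universal disposition for separable spaces exactly as in the proof of Lemma~\ref{isom-oper} and Proposition~\ref{isom-isomo}(a), replacing ``operator of norm $\le\lambda$'' by ``isometric embedding''; the isometry of the extension on the nose follows because all the maps in sight ($\ell$, $v$, $\ell'$, and the lift) preserve norms and the final $\pi\circ T$ has norm $1$ while restricting to the inclusion on $\ran\ell$, forcing it to be isometric there and hence an isometric extension $L\colon B\to\mathcal G_\U$ with $Lv=\ell$.
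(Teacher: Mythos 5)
There is a genuine gap, in fact two. First, your reduction of the problem to the $1$-separable injectivity of $\mathcal G_\U$ loses exactly the property you are supposed to prove. From the push-out of $v\colon A\to B$ and $\ell\colon A\to\mathcal G_\U$ and a norm-one operator $T\colon\PO\to\mathcal G_\U$ splitting the copy of $\ran\ell$ (or of $\mathcal G_\U$), you obtain $L=T\ell'$ with $\|L\|\le 1$ and $Lv=\ell$; this makes $L$ isometric on $v(A)$, but nothing whatsoever forces $\|Lb\|\ge\|b\|$ for $b\notin v(A)$, so $L$ need not be an isometric embedding. The property ``$1$-separably injective'' is strictly weaker than ``of universal disposition for separable spaces'': $\ell_\infty$ is $1$-injective yet, by Theorem~\ref{norMspace}, no space of universal disposition for separable spaces is even complemented in a $C(K)$-space. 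Your closing sentence (``the isometry follows because all the maps in sight preserve norms'') is precisely the point that fails. Second, and independently, your coordinatewise extension step asserts that each coordinate operator $Y\to\mathcal G$ of the lifting extends to $X\to\mathcal G$ with norm at most $1+\e(i)$ for a \emph{separable} pair $Y\subset X$. That would make $\mathcal G$ $\lambda$-separably injective for $\lambda$ arbitrarily close to $1$, which is false: $\mathcal G$ is separable, infinite-dimensional and not isomorphic to $c_0$, hence not separably injective at all (Zippin/Sobczyk). Almost-universal disposition only yields $(1+\e)$-isometric extensions of (near-)isometric embeddings between \emph{finite-dimensional} spaces; Lemma~\ref{isom-oper} gives no extension property for $\mathcal G$ beyond that.

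The paper's proof is built precisely to avoid both problems. It reduces to extending an isometry $u\colon S\to\mathcal G_\U$ to a codimension-one separable superspace $S'$, lifts $u$ to an isometry $\tilde u=(u_i)$ into $\ell_\infty(I,\mathcal G)$ via the $M$-ideal lifting theorem (this part of your plan is fine), and then exhausts $S$ by finite-dimensional subspaces $S_k$. Countable incompleteness is used to produce $n(i)\to\infty$ along $\U$ such that $u_i$ is a $(1+1/n(i))$-isometry on $S_{n(i)}$; each such restriction is extended, by almost-universal disposition \emph{for finite-dimensional spaces}, to a $(1+2/n(i))$-isometry on the one-dimension-larger space $S_{n(i)}'$. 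No single coordinate ever extends an operator from an infinite-dimensional domain, and the extensions are near-isometries by construction, so the ultraproduct operator $[u_i']_\U$, composed with the canonical isometry $S'\to[S'_{n(i)}]_\U$, is genuinely isometric. If you want to keep your architecture, you must replace ``$1$-separable injectivity'' by an argument that produces isometric embeddings, and that forces you back to a finite-dimensional diagonal argument of this kind.
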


\begin{proof} Let $\U$ be a countably incomplete ultrafilter on the index
set $I$. It suffices to see the following: if $S'$ is a separable
Banach space containing a subspace $S$ and we are given an (into)
isometry $u:S\to \mathcal G_{\U}$, then there is an isometry $u':
S'\to \mathcal G_{\U}$ extending $u$. We can assume and do that
$S$ has codimension 1 in $S'$. It is easy to check that $c_0^{
\mathcal U}(I,X_i)$ is an $M$-ideal in $\ell_\infty(I, M_i)$;
hence, as every Lindenstrauss\ space, $\mathcal G$ has the 1-AP,
using Theorem \ref{mideal}, the operator $u$ can be lifted to an
isometry $\tilde u: S\to \ell^\infty(I,\mathcal G)$ which we will
write as $\tilde u(x)=(u_i(x))$ for certain operators $u_i:S\to
\mathcal G$ with norm at most 1. Write
$$
S=\overline {\bigcup_{k=1}^\infty S_k},
$$where $(S_k)$ is an increasing sequence of finite dimensional
subspaces of $S$. Pick $s'\in S'\backslash S$ and let $S_k'$
denote the subspace spanned by $S_k$ and $s'$ in $S'$. Notice that
for each $x\in S$ one has $\|u_i(x)\|_\mathcal G\to \|x\|_S$
following $\U$. This implies that, given a finite dimensional
$E\subset S$ and $\e> 0$, the set
$$
\{i\in I: u_i \text{ is an $(1+\e)$-isometry on } E\}
$$
belongs to $\U$. Let $(I_n)$ be a decreasing sequence of elements
of $\mathscr U$ with intersection not in $\mathscr U$, and
consider the sets
$$
J_n=\{i\in I: u_i \text{ is an $(1+1/n)$-isometry on } S_n\} \cap
I_n.
$$
Then $J_n\in \U$ for all $n$, the sequence $(J_n)$ is decreasing
and $ {\bigcap_{n=1}^\infty J_n}=\varnothing$. For $i\in I$, we
put $n(i)=\max\{n:i\in J_n\}$. Then, of course $n(i)\to\infty$
with respect to $\U$. Next notice that, for each $i\in I$, the
operator $u_i:S_{n(i)}\to \mathcal G$ is an $(1+1/n(i))$-isometry
and so it can be extended to  an $(1+2/n(i))$-isometry
$u_i':S_{n(i)}'\to \mathcal G$. Let us consider the ultraproduct
operator
$$
w=[u_i']_{\U}: [S_{n(i)}']_{\U}\to \mathcal G_{\U}
$$
and the operator $\jmath: S'\to [S_{n(i)}']_\U$ given by $
\jmath(y)=[(x_i)], $ where $x_i$ is any point in $S_{n(i)}'$
minimizing $\d(y,S_{n(i)}')$. The map $\jmath$ is a linear
isometric embedding, and thus the sought-after extension is
$u'=w\circ\jmath$.
\end{proof}

\begin{cor} Under \textsf{CH} the following Banach spaces are all isometrically isomorphic:
\begin{itemize}
\item Any  space of universal disposition for separable spaces of
density $\aleph_1$. \item The space $\mathfrak S^{\omega_1}(\R)$.
\item The Kubis space. \item Any ultrapower of Gurari\u \i\ space
built over a nontrivial ultrafilter on the integers.
\end{itemize}
\end{cor}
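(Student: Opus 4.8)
The plan is to show that all four spaces are isometrically isomorphic by verifying that each of them is a space of universal disposition for separable spaces with density character exactly $\aleph_1$, and then invoking Proposition~\ref{uuds}, which (under \textsf{CH}) guarantees uniqueness of such a space up to isometry. So the proof reduces to a density-character computation and a universal-disposition check for each item on the list.

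First I would handle $\mathfrak S^{\omega_1}(\R)$: by Proposition~\ref{construction}(a) it is of universal disposition for separable spaces, and since $\R$ is separable, the remark following the statement of Proposition~\ref{construction} (the paragraph before Section~4, reiterated at the start of Section~4) shows it has density character $\frak c$, which equals $\aleph_1$ under \textsf{CH}. For ``any space of universal disposition for separable spaces of density $\aleph_1$'' there is nothing to prove beyond noting that Proposition~\ref{cw1} already forces density at least $\frak c=\aleph_1$, so the phrase is consistent and such spaces fall directly under Proposition~\ref{uuds}. For the Kubis space: one must recall that Kubis' Fra\"iss\'e limit in the category of separable Banach spaces and isometric embeddings is, by its defining extension property, of universal disposition for separable spaces, and that its density character is $\frak c=\aleph_1$ (this is part of Kubis' construction in \cite{kubis}); hence it too is covered by Proposition~\ref{uuds}. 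Finally, for an ultrapower $\mathcal G_{\U}$ over a nontrivial (hence countably incomplete) ultrafilter $\U$ on $\N$: Proposition~\ref{fUs} gives universal disposition for separable spaces, and one checks that $\dens \mathcal G_{\U}=\frak c$; under \textsf{CH} this is $\aleph_1$, so Proposition~\ref{uuds} applies once more.

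The only genuinely delicate point is the density-character claim for the ultrapower $\mathcal G_{\U}$. On the one hand $\mathcal G_{\U}$ is a quotient of $\ell_\infty(\N,\mathcal G)$, which has density character $\frak c$, so $\dens \mathcal G_{\U}\le \frak c$; on the other hand, $\mathcal G$ is infinite-dimensional, so a standard argument (e.g.\ embedding $\ell_\infty/c_0$, or using that an ultrapower of an infinite-dimensional space over a free ultrafilter on $\N$ is nonseparable and in fact of density at least $\frak c$) gives $\dens \mathcal G_{\U}\ge \frak c$. Thus $\dens \mathcal G_{\U}=\frak c=\aleph_1$. I expect this to be the main obstacle in the sense that it is the one step not already packaged as a cited lemma; everything else is a direct appeal to Propositions~\ref{construction}, \ref{cw1}, \ref{uuds} and \ref{fUs}. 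Once all four spaces are known to be of universal disposition for separable spaces and of density character $\aleph_1$, Proposition~\ref{uuds} finishes the proof at one stroke.

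\begin{proof}
By Proposition~\ref{uuds}, under \textsf{CH} there is, up to isometry, a unique Banach space of universal disposition for separable spaces with density character $\aleph_1$. It therefore suffices to check that each of the four spaces listed has these two properties. A space of universal disposition for separable spaces of density $\aleph_1$ has them by definition. The space $\mathfrak S^{\omega_1}(\R)$ is of universal disposition for separable spaces by Proposition~\ref{construction}(a), and, being the union of an $\omega_1$-sequence of separable spaces, it has density character $\frak c=\aleph_1$. The Kubis space, as the Fra\"iss\'e limit of the class of separable spaces with isometric embeddings, is of universal disposition for separable spaces and has density character $\frak c=\aleph_1$ by \cite{kubis}. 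Finally, let $\U$ be a nontrivial ultrafilter on $\N$; then $\U$ is countably incomplete, so $\mathcal G_\U$ is of universal disposition for separable spaces by Proposition~\ref{fUs}. Moreover $\mathcal G_\U$ is a quotient of $\ell_\infty(\N,\mathcal G)$, hence $\dens \mathcal G_\U\le\frak c$, while $\mathcal G_\U$ is infinite-dimensional and universally separably injective, so by Proposition~\ref{cw1} (or directly, since it is $1$-separably injective) $\dens \mathcal G_\U\ge\frak c$; thus $\dens \mathcal G_\U=\frak c=\aleph_1$. By Proposition~\ref{uuds} all four spaces are isometrically isomorphic.
\end{proof}
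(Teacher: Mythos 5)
Your proposal is correct and matches the paper's intent exactly: the corollary is stated without explicit proof precisely because it is the immediate juxtaposition of Proposition~\ref{construction}(a), the density remark at the start of Section~4, Proposition~\ref{fUs}, and the uniqueness statement of Proposition~\ref{uuds}, which is the route you take. Your extra care over $\dens\mathcal G_{\U}=\frak c$ (upper bound from the quotient of $\ell_\infty(\N,\mathcal G)$, lower bound from Proposition~\ref{cw1} once universal disposition is established) is a sound way to fill the one step the paper leaves tacit.
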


\section{Remarks on the injective space problem}

One of the main open problems about injective spaces is to know if
every injective space must be isomorphic to a $C(K)$-space. The
existence of separably injective spaces which are not isomorphic
to $C(K)$-spaces has been shown in \cite{castmorestud}. We exhibit
now a $1$-universally separably injective space not isomorphic to
a $C(K)$-space. Our results are in fact much stronger: we will
actually show that spaces of universal disposition for separable
spaces cannot be isomorphic to complemented factors of any $C(K)$
or $\mathcal M$-space. Since spaces of universal disposition for
separable spaces are $1$-universally separably injective, the
previous assertion follows, in striking contrast with the facts
that that injective spaces are complemented in some $C(K)$-spaces
and $1$-injective spaces are moreover isometric to $C(K)$-spaces.

\begin{teor}\label{norMspace}
Banach spaces of universal disposition for separable spaces are
not isomorphic to complemented subspaces of $C(K)$-spaces. In
particular they are not injective.
\end{teor}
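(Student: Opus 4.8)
The plan is to argue by contradiction, assuming a space $E$ of universal disposition for separable spaces is isomorphic to a complemented subspace of some $C(K)$. The strategy exploits the tension between two facts about $E$: on the one hand, by Proposition~\ref{cw1} (and Corollary~\ref{GisPW}), $E$ contains an isometric copy of \emph{every} separable Banach space; in particular it contains isometric copies of separable Lindenstrauss spaces that, by \cite{bl}, are not complemented in any $\mathcal M$-space. On the other hand, being isomorphic to a complemented subspace of a $C(K)$-space would force $E$ to have rather rigid structure --- in particular it would be an $\mathcal{L}_\infty$-space and, more to the point, a complemented subspace of an $\mathcal M$-space.

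First I would set up the contradiction more carefully: suppose $E$ is $\lambda$-isomorphic to a complemented subspace of $C(K)$ via a projection, so that there are operators witnessing $E$ as a complemented factor of an $\mathcal M$-space. Since $E$ is of universal disposition for separable spaces, embed into $E$ a separable Lindenstrauss space $\mathcal{L}$ which is \emph{not} complemented in any $\mathcal M$-space (such $\mathcal{L}$ exists by \cite{bl}). The key step is then to show that, because $E$ is of universal disposition for separable spaces, the copy of $\mathcal{L}$ inside $E$ is in fact \emph{complemented} in $E$: indeed, the identity $\mathcal{L}\to\mathcal{L}\subset E$ is a norm-one operator on a separable (even Lindenstrauss) space, and using the extension/lifting properties of spaces of universal disposition --- precisely the mechanism behind Lemma~\ref{isom-oper} and the proof of Corollary~\ref{GisPW}, where Lindenstrauss spaces embed as $(1+\varepsilon)$-complemented subspaces --- one extends the inclusion to a projection of $E$ (or of a slightly larger space inside $E$) onto $\mathcal{L}$. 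Composing this projection with the projection of $C(K)$ onto $E$ then exhibits $\mathcal{L}$ as a complemented subspace of the $\mathcal M$-space $C(K)$, contradicting the choice of $\mathcal{L}$.

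The main obstacle, and the step that needs the most care, is the complementation of $\mathcal{L}$ inside $E$. Universal disposition gives \emph{isometric extensions of isometric embeddings}, not directly projections; to get a bona fide projection onto the copy of $\mathcal{L}$ one must feed the inclusion $\mathcal{L}\hookrightarrow E$ through a push-out of the form used in Lemma~\ref{isom-oper}, obtaining an operator $E\to\mathcal{L}$ (or $E\to\mathcal G$, or $E\to$ a Lindenstrauss space) that restricts to the identity on $\mathcal{L}$; here one uses crucially that $\mathcal{L}$ is itself a Lindenstrauss space so that the relevant operators extend with norm close to one, as in the proof of the universal property of $G^\omega(X)$ and of Corollary~\ref{GisPW}. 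One should be attentive to the $(1+\varepsilon)$ losses: they only affect isomorphism constants, which is harmless since the conclusion "$\mathcal{L}$ complemented in an $\mathcal M$-space" is an isomorphic, not isometric, statement.

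Finally, the last sentence of the theorem is immediate: an injective Banach space is complemented in any $C(K)$-space containing it (inject it into some $\ell_\infty(\Gamma) = C(\beta\Gamma)$ and extend the identity), so if $E$ were injective it would be a complemented subspace of a $C(K)$-space, which we have just ruled out. I would state this as a one-line corollary of the main argument rather than repeating the push-out machinery.
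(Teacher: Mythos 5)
Your proposal has a genuine gap at its central step: the claim that an isometric copy of a separable Lindenstrauss space $\mathcal L$ inside a space $E$ of universal disposition must be complemented in $E$. The tools you invoke point in the wrong direction. Lemma~\ref{isom-oper} shows that $E$ is $1$-separably injective, i.e.\ it extends operators \emph{into} $E$ from separable subspaces of separable superspaces; what you need is an operator \emph{out of} $E$, namely an extension of the identity $\mathcal L\to\mathcal L$ from the subspace $\mathcal L$ to the (nonseparable) space $E$, and neither universal disposition of $E$ nor any injectivity property of $\mathcal L$ provides that. The complementation in Corollary~\ref{GisPW} is not a property of arbitrary isometric copies of $\mathcal L$ in arbitrary ambient spaces: it comes from the \emph{specific} embedding $\mathcal L\to G^{\omega}(\mathcal L)=\mathcal G$ produced by the push-out construction started at $X=\mathcal L$, whose universal extension property for Lindenstrauss-valued operators is what yields the $(1+\varepsilon)$-projection. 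For an arbitrary isometric embedding of $\mathcal L$ into $E$ there is no such mechanism, and indeed if your claim were available the theorem would be a one-line consequence of \cite{bl}.

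The paper's proof circumvents exactly this obstacle: it never complements anything inside $U$. Instead it uses the \emph{given} pair $e:U\to C(K)$, $\pi:C(K)\to U$ with $\pi e=\mathbf 1_U$ and closes off a countable interleaving $G_0\subset B_0\subset G_1\subset B_1\subset\cdots$, where each $G_n$ is a copy of the Gurari\u\i\ space (obtained from universal disposition, which lets one re-embed $\mathcal G$ over the separable space $B_{n-1}$), $A_n$ is the closed subalgebra of $C(K)$ generated by $e(G_n)$, and $B_n=\overline{\pi(A_n)}$. The union $V=\overline{\bigcup_n G_n}$ is of almost universal disposition for finite-dimensional spaces, hence isometric to $\mathcal G$ by Lusky's uniqueness theorem (this is why one must work with $\mathcal G$ rather than a general $\mathcal L$ from \cite{bl}: the closing-off union must be identifiable), while $e$ and $\pi$ restrict to exhibit $V$ as complemented in the separable unital subalgebra $A=\overline{\bigcup_n A_{n+1}}\cong C(M)$. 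This contradicts Corollary~\ref{GisPW}. Your treatment of the final assertion (injective $\Rightarrow$ complemented in some $C(K)$) is correct, but the main argument needs to be replaced by something like the interleaving above.
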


\begin{proof} Suppose $U$ is (nonzero and) of  universal disposition for
separable spaces and there is an (isomorphic) embedding $e:U\to
C(K)$ and an operator $\pi:C(K)\to U$ such that $\pi e={\bf 1}_U$.
We know that $U$ contains isometric copies of all separable Banach
spaces. Let $G_0$ be a subspace of $U$ isometric to $\mathcal G$,
$A_0$ the (closed) subalgebra spanned by $e(G_0)$ in $C(K)$ and
$B_0$ the closure of $\pi(A_0)$ in $U$. Notice that $B_0$ is a
separable subspace of $U$ containing $G_0$. As $B_0$ embeds   in
$\mathcal G$ we can find another copy $G_1$ of $\mathcal G$ inside
$U$ containing $B_0$. Now, replace $G_0$ by $G_1$ and continue
inductively. This yields a diagram (unlabelled arrows are just
inclusions)
$$
\begin{CD}
G_0@>>> B_0 @>>>  G_1@>>> B_1 @>>>  G_2@>>> \dots \\
 @V e VV @A\pi AA  @V e VV @A\pi AA  @V e VV \\
e(G_0)@>>> A_0 @>>>  e(G_1)@>>> A_1 @>>>  e(G_2)@>>> \dots
\end{CD}
$$
The space
\begin{equation}\label{see}
V=\overline{\bigcup_n G_n}= \overline{\bigcup_n B_n}.
\end{equation}
is of almost universal disposition for finite dimensional spaces,
therefore isometric to $\mathcal G$. On the other hand $e$ embeds
$V=\overline{\bigcup_n B_n}$ in $A=\overline{\bigcup_n A_{n+1}}$
while the restriction of $\pi$ to $A$ is left inverse to $e$ and
so $V$ is (isomorphic to a subspace,) complemented in $A$.
Finally, $A$ is a (separable) unital subalgebra of $C(K)$ hence it
is isometrically isomorphic to $C(M)$ for some compact
(metrizable) $M$. A contradiction with the fact that $\mathcal G$
is not a subspace of any $\mathcal M$-space (see remark after
Coro. 5.4).

\end{proof}

The proof is valid replacing $C(K)$-spaces by $\mathcal M$-spaces
(replace `algebra' by `lattice' everywhere in the proof and take
into account Benyamini's result in \cite{beny} that separable
$\mathcal M$-spaces are isomorphic to $C(K)$ spaces).

\begin{cor}\label{GUnoC}
Ultrapowers of the Gurari\u \i\ space (or more generally, of any
Banach space of almost universal disposition for finite
dimensional spaces) with respect to countably incomplete
ultrafilters are not direct factors in any $\mathcal M$-space.
\end{cor}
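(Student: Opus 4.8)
The plan is to combine Proposition~\ref{fUs} with Theorem~\ref{norMspace}. Proposition~\ref{fUs} tells us that if $\mathcal G_\U$ is an ultrapower of the Gurari\u\i\ space (or of any space of almost universal disposition for finite-dimensional spaces) with respect to a countably incomplete ultrafilter, then $\mathcal G_\U$ is of universal disposition for separable Banach spaces. Theorem~\ref{norMspace} then says that such a space cannot be isomorphic to a complemented subspace of a $C(K)$-space, and by the remark immediately following its proof the same conclusion holds with $\mathcal M$-spaces in place of $C(K)$-spaces. Since a direct factor of an $\mathcal M$-space is exactly a complemented subspace of an $\mathcal M$-space, the corollary follows at once.

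\begin{proof}
By Proposition~\ref{fUs}, an ultrapower of the Gurari\u\i\ space (or, more generally, of any Banach space of almost universal disposition for finite-dimensional spaces) with respect to a countably incomplete ultrafilter is a space of universal disposition for separable Banach spaces. By Theorem~\ref{norMspace} and the remark following its proof, such a space is not isomorphic to a complemented subspace of any $\mathcal M$-space; equivalently, it is not a direct factor of any $\mathcal M$-space.
\end{proof}

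There is essentially no obstacle here: the corollary is a direct juxtaposition of the two cited results, and the only point to note is the (trivial) observation that ``direct factor of an $\mathcal M$-space'' and ``complemented subspace of an $\mathcal M$-space'' mean the same thing, so that the strengthened form of Theorem~\ref{norMspace} (valid for $\mathcal M$-spaces, via Benyamini's theorem that separable $\mathcal M$-spaces are isomorphic to $C(K)$-spaces) applies verbatim.
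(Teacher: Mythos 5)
Your proof is correct and is exactly the argument the paper intends: the corollary is the juxtaposition of Proposition~\ref{fUs} (ultrapowers with respect to countably incomplete ultrafilters are of universal disposition for separable spaces) with Theorem~\ref{norMspace} in its $\mathcal M$-space form. Nothing further is needed.
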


\begin{rem}This statement appears as Theorem~6.8 in
\cite{hensonmoore83}. Unfortunately, the argument provided by
Henson and Moore needs Stern's Lemma  \cite[Theorem
4.5(ii)]{stern}, which is wrong (see \cite{wrong}).\end{rem}

\end{document}